\newtheorem{theorem}{Theorem} [section]
\newtheorem{lemma}[theorem]{Lemma}
\newtheorem{proposition}[theorem]{Proposition}
\newtheorem{remark}[theorem]{Remark}
\newtheorem*{ack}{Acknowledgments}
\let\Re=\undefined\DeclareMathOperator*{\Re}{Re}
\let\Im=\undefined\DeclareMathOperator*{\Im}{Im}
\newcommand{\om}{\omega}
\newcommand{\Om}{\Omega}
\newcommand{\noi}{\noindent}
\newcommand{\R}{\mathbb{R}}
\newcommand{\C}{\mathbb{C}}
\newcommand{\PP}{\mathbb{P}}
\newcommand{\N}{\mathbb{N}}
\newcommand{\F}{\mathcal{F}}
\newcommand{\al}{\alpha}
\newcommand{\nb}{\nabla}
\newcommand{\Dl}{\Delta}
\newcommand{\eps}{\varepsilon}
\newcommand{\G}{\Gamma}
\newcommand{\ld}{\lambda}
\newcommand{\wt}{\widetilde}
\newcommand{\cj}{\overline}
\newcommand{\dt}{\partial_t}
\newcommand{\les}{\lesssim}
\newcommand{\HS}{\textup{HS}}
\newcommand{\jb}[1]
{\langle #1 \rangle}
\newcommand{\ind}{\mathbf 1}
\newcommand{\cE}{\mathcal{E}}
\newcommand{\E}{\mathbb{E}}
\newcommand{\real}{{\textup{real}}}
\numberwithin{equation}{section}
\numberwithin{theorem}{section}
 \DeclareMathAlphabet{\mathpzc}{OT1}{pzc}{m}{it}
\begin{document}

%\baselineskip = 15pt
%\date{\today}

\title[Energy critical SNLS with non-vanishing B.C.]
{Global well-posedness of the 4-d energy-critical stochastic nonlinear Schr\"{o}dinger equations with non-vanishing boundary condition}

\author[ K.~Cheung and  G.~Li]
{ Kelvin Cheung and Guopeng Li}

\address{
Kelvin Cheung\\
Department of Mathematics\\
Heriot-Watt University and 
the Maxwell Institute for the Mathematical Sciences\\
Edinburgh\\ EH14 4AS\\ United Kingdom}

\email{K.K.Cheung-3@sms.ed.ac.uk}

\address{
Guopeng Li\\
School of Mathematics\\
The University of Edinburgh\\
and The Maxwell Institute for the Mathematical Sciences\\
James Clerk Maxwell Building\\
The King's Buildings\\
Peter Guthrie Tait Road\\
Edinburgh\\ 
EH9 3FD\\United Kingdom} 

\email{guopeng.li@ed.ac.uk}

\subjclass[2010]{35Q55.}

\vspace*{1mm}

\keywords{stochastic nonlinear Schr\"odinger equation; global well-posedness; energy-critical; non-vanishing boundary condition; perturbation theory}

\begin{abstract}
We consider the energy-critical stochastic cubic nonlinear Schr\"odinger equation on $ \R^4 $ with additive noise, and with the non-vanishing boundary conditions at spatial infinity.
By viewing this equation as a perturbation to the energy-critical cubic nonlinear Schr\"odinger equation on $ \R^4 $,
we prove global well-posedness in the energy space. Moreover, we establish
unconditional uniqueness of solutions in the energy space.

\end{abstract}

\maketitle

%\tableofcontents

\section{Introduction}
\label{SEC:1}

\subsection{Stochastic nonlinear schr\"odinger equation}
We study the Cauchy problem for the following (defocusing) energy-critical stochastic nonlinear Schr\"{o}dinger equation (SNLS) with an additive noise on $\R^4$:
\begin{equation}
\label{SNLS}
\begin{cases}
i \dt u + \Dl  u = (|u|^{2}-1) u + \phi \xi \\
u|_{t = 0} = u_0,
\end{cases}
\qquad (t, x) \in \R_+\times \R^4,
\end{equation}
with the non-vanishing boundary condition:
\begin{align}\label{Non-VB}
\lim_{|x|\rightarrow\infty} |u(x)|=1.
\end{align}
Here, $u$ is a complex-valued function, $\R_+$ denotes the non-negative interval $[0,\infty)$, $\xi (t,x)$ denotes a space-time white noise on $\R_+ \times \R^4$, 
and $\phi$ is a bounded operator on $L^2(\R^4)$. Our main goal in this paper is to establish global well-posedness of \eqref{SNLS} subject to \eqref{Non-VB}.

Let us first go over some basic background by considering the following deterministic equation:
\begin{equation}
\label{NLS}
\begin{cases}
i \dt u + \Dl  u =  (|u|^{2}-1) u \\
u|_{t = 0} = u_0,
\end{cases}
\qquad (t, x) \in \R_+\times \R^d,
\end{equation}

\noi
with the non-vanishing boundary condition \eqref{Non-VB}. The equation \eqref{NLS} is also known as the \emph{Gross-Pitaevskii equation} in the literature. 
If $u$ is a solution to \eqref{NLS}.
Then, $\wt{u}=e^{-it}u$ is a solution to the following equation:
\begin{align}\label{NLS-usual}
i \dt \wt{u} + \Dl  \wt{u} 
=  |\wt{u}|^2 \wt{u},
\end{align}

\noi
with boundary condition $\lim_{|x|\to\infty}|\wt{u}(x)|=1$. Here, \eqref{NLS-usual} is the usual (defocusing) NLS on $\R^d$ with cubic nonlinearity. In dimension $d=4$, we say \eqref{NLS-usual} is energy-critical, in the sense that the energy ($=$Hamiltonian)
\begin{align*}
E_0(\wt{u})(t)=\frac{1}{2}\int_{\R^4} |\nb \wt{u}|^2dx+\frac{1}{4}\int_{\R^4} |\wt{u}|^4dx
\end{align*}

\noi
is invariant under the scaling
\begin{align*}
\wt u(t,x)\mapsto \wt u^\lambda(t,x)= \lambda^{-1}\wt u(\lambda^{-2}t,
\lambda^{-1}x)
\qquad 
\text{for }
\qquad
\ld>0,
\end{align*}

\noi
which is also a symmetry for the equation \eqref{NLS-usual}.
For this reason, we also refer to SNLS \eqref{SNLS} as \emph{energy-critical}.

Now, the energy for the Gross-Pitaevskii equation \eqref{NLS} is given by the Ginzburg-Landau energy:
\begin{align}
\label{Eng1}
E(u)(t)=\frac{1}{2}\int_{\R^d} |\nb u|^2dx+\frac{1}{4}\int_{\R^d} \big( |u|^2 -1\big)^2dx,
\end{align}
define on the space
\[
\text{Eng}=\{
u\in H^1_{\text{loc}}(\R^d) : \nb u\in L^2({\R^d}), \, |u|^2-1\in L^2(\R^d)
\}.
\]
In spatial dimensions $d=1,2,3$, G\'erard \cite{G1,G2} proved global well-posedness of \eqref{NLS} with condition \eqref{Non-VB} on the energy space, that is, the space of functions $u$ such that $E(u)<\infty.$ He also proved that the energy space in dimensions $d=3,4$ can be expressed as 
\begin{align}\label{E_gp}
\cE(\R^d):=\big\{ u=\al+v : \alpha\in\C,  |\al|=1, v\in\dot{H}^1(\R^d), |v|^2+2\Re(\bar{\al}v)\in{L}^2(\R^d)  \big\}.
\end{align}
More recently, Killip, Oh, Pocovnicu, and Vi\c san \cite{KOPV} studied \eqref{NLS} for $d=4$ (i.e. the energy-critical case). They treat \eqref{NLS} as a perturbation of the energy-critical NLS \eqref{NLS-usual}, and then utilised the perturbative techniques from Tao, Vi\c san, and Zhang \cite{TVZ} together with the conservation of the energy $E(u)$ to establish unconditional global well-posedness in $\cE(\R^4)$. Our paper is inspired by the work in \cite{KOPV}, where we shall employ similar perturbative techniques on the energy-critical SNLS \eqref{SNLS}.

Outside of the energy space $\cE(\R^d)$, global well-posedness of the Gross-Pitaevskii equation was also established by Zhidkov \cite{Z} for $d=1$, and by Gallo \cite{G}, B\'ethuel, and Saut \cite{BS} for $d=2, 3$. In \cite{G, Z}, they considered initial data from what are now termed \emph{Zhidkov spaces}, while in \cite{BS}, the authors instead considered data from $1+H^1(\R^d)$.

\subsection{Main result}
Let us now turn our attention back to SNLS \eqref{SNLS}. We say that $u$ is a solution to \eqref{SNLS} if it satisfies the non-vanishing boundary condition \eqref{Non-VB} and solves the following Duhamel formulation ($ = $
mild formulation):
\begin{align*}
u(t) = S(t) u_0 -i \int_0^t S(t-t') \big((|u|^{2}-1)u\big) (t') dt' -i \int_0^t S(t-t') \phi \xi (dt'),
\end{align*}
where $S(t) := e^{it \Dl}$ denotes the linear Schr\"{o}dinger propagator. The last term is known as the stochastic convolution and we denote it by 
\begin{align}
\Psi(t) := -i \int_0^t S(t-t') \phi \xi (dt'),
\label{sconv1}
\end{align}
see \eqref{defSC} below for a precise definition.
The regularity of $\Psi$ is dictated by the nature of $\phi$. More specifically, if $\phi \in \HS(L^2; H^s)$, namely, a Hilbert-Schmidt\footnote{We recall the definition of Hilbert-Schmidt operators in Section \ref{SUBSEC:StoCo}.}
operator from $L^2(\R^4)$ to $H^s(\R^4)$, then $\Psi \in C(\R_+; H^s(\R^4))$ almost surely; see Lemma \ref{LEM:sccont} below. Since \eqref{SNLS} is energy-critical, we impose that $\phi\in \HS(L^2;H^1)$.

In the case of zero boundary condition, Oh and Okamoto \cite{OO} employed similar techniques as in \cite{KOPV} to prove global well-posedness for stochastic nonlinear Schr\"odinger equations with an additive noise
(and general power-type nonlinearities)
\begin{align}\label{SNLS0}
i \dt{u} + \Dl {u} =  |{u}|^{p-1} {u} +\phi\xi,
\end{align} 
in $H^1(\R^d)$ in the energy-critical cases; i.e. when $3\le d\le 6$, $p=1+\frac{4}{d-2}$ and $\phi\in \HS(L^2;H^1)$. The authors also established global well-posedness in the \emph{mass-critical case}, see \cite{OO} for more details. We also mention the recent paper \cite{CP} where the first author and Pocovnicu proved local well-posedness of the cubic SNLS (i.e. $p=3$ in \eqref{SNLS0}) with critical data and supercritical noise.  For other works on SNLS with zero boundary condition, see for example \cite{BD}. To the best of our knowledge, there has been no previous work on SNLS with non-zero boundary condition at the time of writing.
%on (i) $L^2(\R^d)$ in the mass-critical case $p=1+\frac4d$ with $\phi \in \HS(L^2(\R^d); L^2(\R^d))$, and (ii) $H^1(\R^d)$ in the energy-critical case $p=1+\frac4{d-2}$ with $\phi \in \HS(L^2(\R^d); H^1(\R^d))$.
%studied \eqref{SNLS0} on $\R^d$. in the energy-subcritical setting, where the Strichartz estimate played an important role. We also mention the recent papers \cite{CP, OPW} on local well-posedness of \eqref{SNLS0} with an additive noise rougher than the critical regularities, i.e. $ \phi\in \HS (L^2;H^s) $ with $ s<s_{crit} $.

Our main result in this paper is as follows:

\begin{theorem}[Global well-poseness of SNLS]\label{THM:0}
Let  $\phi \in \HS(L^2(\R^4); H^1(\R^4))$. Then, the SNLS \eqref{SNLS} with the condition \eqref{Non-VB} is globally well-posed in the energy space $ \cE(\R^4)$. In particular, solutions  are unique in the class $\Psi+ C\big(  \R_+;\cE(\R^4)\big)$.
\end{theorem}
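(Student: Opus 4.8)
The plan is to realise \eqref{SNLS} as a deterministic perturbation of the Gross--Pitaevskii equation \eqref{NLS} and then to import the deterministic theory of \cite{KOPV} via the long-time perturbation theory of \cite{TVZ}. First I would set $v := u - \Psi$, where $\Psi$ is the stochastic convolution \eqref{sconv1}. By Lemma~\ref{LEM:sccont} one has $\Psi \in C(\R_+;H^1(\R^4))$ almost surely, and since every $H^1(\R^4)$ function lies in $L^2\cap L^4$ and vanishes at spatial infinity, the translation $w \mapsto w - \Psi(t)$ preserves both the boundary condition \eqref{Non-VB} and the energy space $\cE(\R^4)$ described in \eqref{E_gp}; in particular $u \in \Psi + C(\R_+;\cE(\R^4))$ if and only if $v \in C(\R_+;\cE(\R^4))$, which is exactly the uniqueness class in the statement. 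Subtracting the identity $i\dt\Psi + \Dl\Psi = \phi\xi$ from \eqref{SNLS} shows that, for each fixed realisation of the noise, $v$ solves the \emph{deterministic} equation
\begin{equation*}
i\dt v + \Dl v = \big(|v+\Psi|^2-1\big)(v+\Psi), \qquad v|_{t=0}=u_0,
\end{equation*}
which I would rewrite as $i\dt v + \Dl v = (|v|^2-1)v + \mathcal{N}(v,\Psi)$, with $\mathcal{N}(v,\Psi)$ collecting the finitely many terms each carrying at least one factor of $\Psi$. Thus $v$ is an exact solution of \eqref{NLS} driven by the forcing $\mathcal{N}(v,\Psi)$, in which $\Psi$ plays the role of a known $H^1$-valued, continuous-in-time datum.

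The deterministic input is the result of \cite{KOPV}: \eqref{NLS} is globally well-posed in $\cE(\R^4)$ and, being defocusing and energy-critical, its solutions obey a global spacetime bound. Concretely, the solution $\tilde v$ of \eqref{NLS} with data $u_0$ satisfies $\|\tilde v\|_{L^6_{t,x}(\R_+\times\R^4)} \le C(E(u_0)) < \infty$ in the scaling-critical spacetime norm. With this in hand I would first set up a local well-posedness theory for the $v$-equation in $\cE(\R^4)$ by a contraction-mapping argument based on the Strichartz estimates, treating $\mathcal{N}(v,\Psi)$ perturbatively; here the almost sure bounds $\Psi\in C([0,T];H^1)$ and the finiteness of the Strichartz norms of $\Psi$ on every bounded interval $[0,T]$ supply the control on the forcing, and the local time of existence depends on $u_0$ and on the size of $\Psi$ over the interval.

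The crux is then to upgrade local existence to a global-in-time spacetime bound for $v$ by comparing it with $\tilde v$ through the stability/long-time perturbation lemma of \cite{TVZ}. Since the deterministic bound $\|\tilde v\|_{L^6_{t,x}(\R_+\times\R^4)}\le C(E(u_0))$ is already global, $\R_+$ may be partitioned into a number $N=N(E(u_0))$ of subintervals --- \emph{uniformly in $T$} --- on each of which $\|\tilde v\|_{L^6_{t,x}}$ falls below the smallness threshold of the lemma. On a fixed window $[0,T]$ the forcing $\mathcal{N}(v,\Psi)$ has finite dual Strichartz norm almost surely, so after a further ($T$-dependent) refinement of the partition the error can be made small on each piece; iterating the perturbation lemma across the finitely many pieces and tracking the multiplicative accumulation of the errors then yields $\|v\|_{L^6_{t,x}([0,T]\times\R^4)}<\infty$. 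As finiteness of the critical spacetime norm rules out blow-up, this gives a solution $v$ on every $[0,T]$, hence a global solution $u = v + \Psi \in \Psi + C(\R_+;\cE(\R^4))$. I expect this step to be the main obstacle: the partition coming from $\tilde v$ is uniform in $T$, whereas the number of subintervals needed to make the stochastic error $\mathcal{N}(v,\Psi)$ small grows with $T$, so one must carefully control how the spacetime bound accumulates over the refined partition and verify that the nonlinear interaction terms in $\mathcal{N}(v,\Psi)$ --- which mix the non-decaying background, the $\dot H^1$-part of $v$, and the $H^1$-regular $\Psi$ --- are genuinely estimable in the dual Strichartz norm demanded by the lemma. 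If a uniform $H^1$-bound on $[0,T]$ is needed to feed the local theory, I would supply it by a modified-energy computation, differentiating $E(v)$ in time, bounding the contributions of $\Psi$, and closing by Gronwall's inequality.

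Finally, for unconditional uniqueness in $\Psi + C(\R_+;\cE(\R^4))$ I would transfer the energy-space uniqueness statement for \eqref{NLS} from \cite{KOPV}: two solutions $u_1,u_2$ of \eqref{SNLS} with the same data yield $v_j := u_j - \Psi \in C(\R_+;\cE(\R^4))$ solving the same perturbed Gross--Pitaevskii equation, and the uniqueness argument of \cite{KOPV} --- which requires no membership in an auxiliary Strichartz class --- applies with the nonlinearity replaced by $(|v|^2-1)v + \mathcal{N}(v,\Psi)$, since $\Psi$ enters merely as a fixed $H^1$-datum.
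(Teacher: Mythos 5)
Your overall architecture (subtract the stochastic convolution pathwise, view the result as a deterministically forced equation, run long-time perturbation theory, prove unconditional uniqueness separately) matches the paper's, but the central deterministic input you rely on does not exist, and this is a genuine gap. You claim that the Gross--Pitaevskii solution $\tilde v$ of \eqref{NLS} satisfies a \emph{global} spacetime bound $\|\tilde v\|_{L^6_{t,x}(\R_+\times\R^4)}\le C(E(u_0))$, attributing this to \cite{KOPV} and to the equation being ``defocusing and energy-critical''. No such bound is proved in \cite{KOPV}, and it does not follow from that heuristic: the heuristic applies to the scale-invariant equation \eqref{NLS-usual}, whereas \eqref{NLS} contains the lower-order terms $2\Re(v)$, $|v|^2$, $2\Re(v)v$, \dots, which break scaling and whose dual Strichartz norms grow with the length of the time interval; accordingly \cite{KOPV} only obtain spacetime bounds on compact intervals, with constants depending on the interval (in dimensions $2,3$ the finite-energy travelling waves of Gross--Pitaevskii give explicit counterexamples to any uniform-in-time bound, and in $d=4$ such a bound is at best open). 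Relatedly, the perturbation lemma of \cite{TVZ,KV} (Lemma \ref{LEM:pert}) is a stability statement around the cubic NLS \eqref{NLS1}, not around \eqref{NLS}, so taking Gross--Pitaevskii as the reference equation would additionally require you to first build a stability theory for \eqref{NLS} itself, which is not off the shelf. The paper avoids both problems: it compares $v=u-1-\Psi$ directly with the cubic NLS \eqref{NLS1}, for which the global bounds \eqref{global-bound} are known \cite{RV,V2}, and places \emph{both} the Gross--Pitaevskii lower-order terms \emph{and} all $\Psi$-dependent terms into the error $e=g(v,\Psi)$ of \eqref{pert}. The number of subintervals in the iteration then depends on $T$ (through the energy bound $R$ and the modulus \eqref{condition} for $\Psi$), which is harmless because existence is only needed on each compact $[0,T]$; this also dissolves the ``main obstacle'' you flag, since no uniformity in $T$ is ever required.

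The second gap is the a priori energy bound, which you relegate to a conditional afterthought (``if a uniform $H^1$-bound is needed\dots''). It is needed: it is precisely hypothesis (ii) of the paper's Proposition \ref{Pro:perturb2}, without which the iteration of Lemma \ref{LEM:pert} cannot proceed past the first subinterval (one loses control of $E_0$). Moreover, your proposed route --- differentiate $E(v)$ in time and close by Gronwall --- does not work. Pathwise, $\frac{d}{dt}E(v)$ produces terms such as $\Im\int_{\R^4}\nb \cj v\cdot\nb\big(|v|^2\Psi\big)\,dx$, which are scaling-critical and cannot be bounded by $E(v)$ times a time-integrable factor; controlling them requires the very Strichartz bounds on $v$ that one is trying to establish. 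The paper instead applies Ito's lemma to $E(u)$ for the original equation \eqref{SNLS} (Proposition \ref{Pro:Bound-Ham}): since the deterministic Hamiltonian flow conserves $E$ exactly, only the noise contributes --- an Ito correction quadratic in $\phi$, given by \eqref{Eq:ham1}--\eqref{Eq:ham2}, and a martingale term \eqref{Eq:ham3} handled by the Burkholder--Davis--Gundy and Young inequalities --- and the estimate closes by absorbing $\E\big[\sup_{0\le t\le T}E(u)(t)\big]$ into the left-hand side, with no Gronwall loss. This stochastic-calculus step, together with the approximation argument needed to justify Ito's lemma, cannot be replaced by a pathwise Gronwall computation on $E(v)$.
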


\begin{remark}\rm
Theorem \ref{THM:0} implies global well-posedness of SNLS \eqref{SNLS} subject to zero boundary condition in the energy space $ \cE(\R^4) $. Indeed, one can simply use the transformation $ \wt{u}:=u-\al$ to convert one solution to the other.
\end{remark}

Let us elaborate on our method of the proof. In four dimensions, the energy space $\cE(\R^4)$ can be re-expressed as in \eqref{Eng}. Suppose that solution 
$$u(t)=\alpha+v(t) \in\cE(\R^4),$$

\noi
with\footnote{
In particular,
 it is a complex number of modulus~$1$.
 (since this is about a definition of space of functions
depending only on $x$).
Then, by rotating to set $\al = 1$ (without
loss of generality).
See for example [\cite{G1} Section 5].
Furthermore, this is needed for the initial data such that
$ v_0 = u_0 - 1$
belongs to the right space~($H^1_\text{real}~+~i \dot H^1_\text{real}$).  
As for the
equation, one can always assume that $u$ is written as $u = 1 + v$.  The
point is if $v_0 $ does not belong to the right space, it's useless.

}
$\alpha=e^{i\theta}$. We note here $\al$
% as in \eqref{E_gp}
 is independent of time $t$.
By the gauge invariance in
law (if $\al$ depends on time $t$, this is no longer ture)
of the equation \eqref{SNLS} ($u\mapsto e^{-i\theta}u$). 
Therefore, we can assume
 \footnote{
For equation \eqref{NLS}, 
by the gauge invariance of the equation. It is enough to assume that $\al=1$, see [\cite{KOPV} p.2].
}
 $\theta=0$, and hence $\alpha=1$. 
Furthermore, if we take $ v\in \dot{H}^1(\R^4),$ by the Sobolev embedding $ \dot{H}^1(\R^4)\subset {L}^4(\R^4)$ and \eqref{E_gp},  we have $ \Re(v)  \in {L}^2(\R^4)$. Hence, when $ d=4 $, the energy space is given by
\begin{align}
\label{Eng}
\cE(\R^4):=\{ u=1+v :  v\in H^1_{\real}(\R^4)+i\dot{H}^1_{\real}(\R^4)  \},
\end{align}

\noi
where $H^1_{\textup{real}}(\R^4):=H^1(\R^4;\R) $ is the Sobolev space of real-valued functions, and $\dot{H}^1_{\textup{real}}(\R^4) $ is similarly defined. 
We now rewrite the equation \eqref{SNLS}:
 suppose that $ u =1+v^*$ is a solution to \eqref{SNLS}.
Then, $v^*$ satisfies
\begin{align*}
\begin{cases}
i\dt v^* + \Dl v^* = |v^*|^2v^* +2\Re(v^*) v^* + |v^*|^2 +2\Re(v^*)+\phi \xi\\
v^*|_{t=0}:=u_0-1.
\end{cases}
\end{align*}

\noi
In terms of $v^*$, the energy to the deterministic equation ($\phi \equiv 0$) can then be expressed as
\begin{align}\label{energy}
E(u)(t)=E(v^*+1)(t)=\frac{1}{2}\int_{\R^4}|\nb v^*(t)|^2 dx+\frac{1}{4}\int_{\R^4}\big(|v^*(t)|^2+2\Re(v^*(t))\big)^2\,dx,
\end{align}

\noi
where we continue to denote $E(v^*+1)$ by $E(u)$ for simplicity. It is in this form where we shall establish an a priori bound on the energy of solutions to \eqref{SNLS}, as seen in Proposition \ref{Pro:Bound-Ham}. To actually construct a global-in-time solution, we go one step further and subtract $\Psi$ from $v^*$, which is usually refer to  \emph{Da Prato-Debussche trick} in  the stochastic analysis.	
Write $v := u -1- \Psi$, where $v$ satisfies
\begin{align}\label{SNLS2}
\begin{cases}
i \dt v + \Dl  v =|v|^2v+g(v,\Psi) \\
v|_{t = 0} =v_0:= u_0-1.
\end{cases}
\end{align} 

\noi
Here, $g(v,\Psi)$ is defined to be
\begin{align}
\label{pert}
\begin{split}
g(v,\Psi)&:=(|v+1+\Psi|^{2}-1) (v+1+\Psi)-|v|^2v\\
& =2\Re(v)v+2\Re(\Psi)v+2\Re(\bar{v}\Psi)v\\
&\quad+|\Psi|^2v+|v|^2+2\Re(v)+2\Re(\Psi)+2\Re(\cj{v}\Psi)+|\Psi|^2\\
&\quad+\Psi|v|^2+2\Re(v)\Psi+2\Re(\Psi)\Psi+2\Re(\cj{v}\Psi)\Psi+|\Psi|^2\Psi,
\end{split}
\end{align} 

\noi
which can heuristically viewed as
\begin{align*}
\mathcal{O}\bigg(\sum_{j=1}^3(\Psi+v)^j-v^3\bigg).
\end{align*}
Due to the real parts and the conjugate signs play little to no role in our arguments.
The equation \eqref{SNLS2} can be viewed as the energy-critical NLS \eqref{NLS-usual} with the perturbation $g(v,\Psi)$. As seen later on, the regularity properties of $\Psi$ (Section \ref{SUBSEC:StoCo} below) and the a priori bound on the energy will allow us to invoke the perturbation lemma from \cite{TVZ} (Lemma \ref{LEM:pert} below) on $g(v,\Psi)$ iteratively over any finite time interval to construct a solution $v$ to \eqref{SNLS2}. Finally, the unconditional uniqueness of $v$ needs to be proved via a separate argument adapted from \cite{KOPV}.

The rest of the paper is organized as follows.
In Section \ref{SECT:Prelim}, we introduce some
notations, state regularity properties of the stochastic convolution, and present the key perturbation lemma. 
Then, we give a proof of local well-posedness of the perturbed NLS \eqref{PNLS} in 
Section
\ref{SEC:Pertub}. Next, we apply the perturbation lemma to show the global existence of solutions to the deterministic perturbed NLS.
Finally, Section
\ref{SEC:MainTheorem}
is devoted to proving Theorem \ref{THM:0}. This splits into three parts: the establishment of an a priori bound on the energy, the application of the perturbation lemma, and the proof of unconditional uniqueness.

\section{Preliminaries}\label{SECT:Prelim}

In this section, we introduce some notations
and go over preliminary results.

\subsection{Strichartz estimates}
We now recall the Strichartz estimates. Given $ 0< q, r\le \infty$ and a time interval $I\subseteq \R$, we consider the mixed Lebesgue spaces $L^q_tL^r_x(I\times\R^4)$ of space-time functions $u(t,x) $, endowed with the norm 
\begin{align*}
\|u\|_{L^q_tL^{r}_x(I\times\R^4)}=\bigg(  \int_I\bigg( \int_{\R^4} |u(x,t)|^r \,dx  \bigg)^{\frac{q}{r}}  \,dt\bigg)^{\frac{1}{q}}.
\end{align*}

\noi
We use short-hand notations such as
\[L^q_tL_x^r (I\times \R^4)=L^q(I;L^r(\R^4)); \quad
L^{r}_{t,x}(I\times \R^4)=L^r(I;L^r(\R^4)) , \text{ when }  q=r. 
\]

We say that a pair of exponents $ (q,r)$ is admissible if $ \frac{2}{q}+\frac{4}{r}=2 $ with $ 2\leq q,r\leq\infty $. It is convenient to introduce the following norms. Given a space-time slab $ I\times\R^4$, and $j\in\{0,1\}$, we define the $ \dot{S}^j(I) $-norm by
\begin{equation*}
\|u\|_{\dot{S}^j(I)}:=\sup\big\{\|\nb^ju\|_{L^q_tL^r_x(I\times\R^4)}: (q,r) \mbox{ is admissible}\big\}.
\end{equation*}
We use $ \dot{N}^j(I) $ to denote the dual space of $ \dot{S}^j(I).$ More precisely, we define 
\begin{equation*}
\|u\|_{\dot{N}^j(I)}:=\inf\big\{\|\nb^ju\|_{L^{q'}_tL^{r'}_x(I\times\R^4)}: (q,r) \mbox{ is admissible}\big\},
\end{equation*}
where $(q',r')$ denotes the pair of H\"older conjugates of $(q,r)$. We state the Strichartz estimates in terms of these norms;  see \cite{Strichartz, Yajima, GV, KeelTao}. 
Note, we write $ A \les B $ to denote an estimate of the form $ A \leq CB $, for some constant $ C>0. $

\begin{lemma}[Strichartz estimates]
\label{LM:Stri}
Let $ j\in \{0,1\} $. We have the following homogeneous estimate
\begin{align*}
\|S(t)u_0 \|_{\dot{S}^j(I)}\lesssim \|u_0\|_{\dot{H}^{j}(\R^4)}.
\end{align*}

\noi
For an interval $ I=[t_0,t]\subseteq \R $,
we have the inhomogeneous Strichartz estimate
\begin{align*}
\bigg\| \int_{t_0}^{t} S(t-t')F(t')  dt' \bigg\|_{\dot{S}^j(I)}\lesssim \|F\|_{\dot{N}^j(I)}.
\end{align*}
\end{lemma}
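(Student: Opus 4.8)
The plan is to follow the classical $TT^*$ route culminating in the Keel--Tao endpoint argument. First I would reduce both estimates to the case $j=0$: since $S(t)=e^{it\Dl}$ and $\nb$ are both Fourier multipliers they commute, so $\nb^j S(t)u_0=S(t)\nb^j u_0$ and $\nb^j\int_{t_0}^t S(t-t')F(t')\,dt'=\int_{t_0}^t S(t-t')\nb^j F(t')\,dt'$, and then the identity $\|\nb^j u_0\|_{L^2}=\|u_0\|_{\dot H^j}$ sends everything back to the $L^2$-based homogeneous bound and its dual inhomogeneous counterpart. The analytic starting point is the explicit oscillatory kernel $S(t)f(x)=(4\pi i t)^{-d/2}\int_{\R^d}e^{i|x-y|^2/4t}f(y)\,dy$, which immediately yields the dispersive bound $\|S(t)f\|_{L^\infty_x}\les |t|^{-d/2}\|f\|_{L^1_x}$. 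Interpolating this with the mass conservation law $\|S(t)f\|_{L^2_x}=\|f\|_{L^2_x}$ via Riesz--Thorin gives $\|S(t)f\|_{L^r_x}\les |t|^{-d(\frac12-\frac1r)}\|f\|_{L^{r'}_x}$ for every $2\le r\le\infty$.

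Next I would run the duality/$TT^*$ argument. Writing $Tf=S(\cdot)f$, the boundedness $T:L^2_x\to L^q_tL^r_x$ is equivalent to the boundedness of $TT^*F(t)=\int_\R S(t-t')F(t')\,dt'$ from $L^{q'}_tL^{r'}_x$ to $L^q_tL^r_x$. Inserting the dispersive bound, applying Minkowski's inequality, and then the Hardy--Littlewood--Sobolev inequality in the time variable closes the estimate for every admissible $(q,r)$ strictly away from the endpoint: the admissibility relation $\frac2q+\frac4r=2$ (in $d=4$) with $q>2$ makes the temporal kernel $|t-t'|^{-d(\frac12-\frac1r)}$ of exactly the homogeneity demanded by HLS. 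This single computation simultaneously produces the homogeneous estimate and the full-line inhomogeneous estimate for a pair of admissible exponents.

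The two remaining points are the endpoint and the truncation of the time integral, and I expect the endpoint to be the main obstacle. In $d=4$ the endpoint admissible pair is $(q,r)=(2,4)$, where $q=2$ and the HLS step degenerates; here I would invoke the Keel--Tao bilinear argument, decomposing $TT^*$ into dyadic pieces $|t-t'|\sim 2^k$ and recovering the endpoint homogeneous bound $\|S(t)u_0\|_{L^2_tL^4_x}\les\|u_0\|_{L^2}$ and the endpoint inhomogeneous bound by real interpolation off the off-diagonal estimates. Finally, the Duhamel term in \eqref{duhamel} is the \emph{retarded} integral over $[t_0,t]$ rather than over all of $\R$; the passage from the full-line inhomogeneous estimate to this truncated one is supplied by the Christ--Kiselev lemma, which applies precisely when $q>q'$, i.e.\ away from the double endpoint $q=2$. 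The endpoint homogeneous estimate and, especially, the retarded double-endpoint inhomogeneous estimate $\dot N^1\to\dot S^1$ both fall outside the reach of the elementary HLS/Christ--Kiselev scheme and require the finer bilinear interpolation of \cite{KeelTao}, which is exactly where the cited references do the work.
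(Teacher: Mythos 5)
Your proposal is correct and follows exactly the route the paper relies on: the paper states this lemma without proof, citing \cite{Strichartz, Yajima, GV, KeelTao}, and your argument (commuting $\nb^j$ with the propagator, dispersive estimate plus $TT^*$ and Hardy--Littlewood--Sobolev for non-endpoint pairs, the Keel--Tao bilinear interpolation for the endpoint $(2,4)$ and the retarded double-endpoint case, Christ--Kiselev otherwise) is precisely the standard proof contained in those references. The only caveat is that the delicate points you correctly flag — the endpoint and the retarded double-endpoint estimate — are exactly what \cite{KeelTao} supplies, so your sketch is complete modulo reproducing that bilinear real-interpolation argument.
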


We note down some admissible pairs that will be used throughout this paper:
\begin{align*}
(2,4),\qquad \bigg(6,\frac{12}{5}\bigg), \qquad (\infty,2);
\end{align*}
as well as their corresponding dual indices:
\begin{align*}
\bigg(2,\frac43\bigg),\qquad
\bigg(\frac65,\frac{12}{7}\bigg),\qquad
 (1,2).
\end{align*}
Lastly, given a time interval $ I $, we shall define the space $\dot{X}^1(I)$ endowed with the norm
\begin{align}\label{Xspace}
\|u\|_{\dot{X}^1(I)}:=\|\nb u\|_{L^6_t L^\frac{12}{5}_x(I\times\R^4)},
\end{align}
which serves as an auxiliary space on which we establish local well-posedness.

\subsection{On the stochastic convolution}
\label{SUBSEC:StoCo}
In this section, we record some standard properties of the stochastic convolution $\Psi$ defined in \eqref{sconv1}.
First, we will go through the presise definition of stochastic convolution $\Psi$, see for example in \cite{OO, OPW, ORSW}. 
Given two separable Hilbert spaces $H$ and $K$, we denote by $\HS (H;K)$ the space of Hilbert-Schmidt operators $\phi$ from $H$ to $K$, endowed with the norm:
\[
\| \phi \|_{\HS(H;K)} = \bigg( \sum_{n \in \N} \| \phi e_n \|_K^2 \bigg)^{\frac{1}{2}},
\]
where $\{ e_n \}_{n \in \N}$ is an orthonormal basis\footnote{Recall that the definition of the $\HS(H;K)$-norm is independent of the choice of $\{e_n\}_{n\in\N}$.} of $H$.

Let $(\Om, \F, \PP)$ be a probability space endowed with a filtration $\{ \F_t \}_{t \ge 0}$.
Fix an orthonormal basis $\{ e_n \}_{n \in \N}$ of $L^2(\R^d)$.
Let $W$ be the $L^2 (\R^d)$-cylindrical Wiener process given by
\[
W(t,x,\om) := \sum_{n \in \N} \beta_n (t,\om)  e_n (x),
\]

\noi
and  
$\{ \beta_n \}_{n \in \N}$ 
is defined by 
$\beta_n(0) = 0$ and 
$\beta_n(t) = \jb{\xi, \ind_{[0, t]} \cdot e_n}_{ t, x}$.
Here, we first denote $\jb{\cdot, \cdot}_{t, x}$  as
the duality pairing on $\R\times \R^d$.
Then,
$\xi(t,x)$ denotes the space-time white noise, formally satisfies
\[\mathbb{E}[\xi(t,x)\xi(s,y) ]=\delta(t-s)\delta(x-y).
\]

\noi
Rigorously, we write equation \eqref{SNLS} in the Ito formulation form:
\[
du+(\Dl u)dt=\{(|u|^2 -1 )u\}dt +\phi dW
\]

\noi
Hence, formally $``\xi(t,x)dt=dW(t)"$.
Thus, we formally have
\[
\beta_n(t) = \jb{\xi, \ind_{[0, t]} e_n}_{t, x}
 = \text{``}\int_0^t \int_{\R^d}
\cj{e_n(x)} \xi( dx dt')\text{''}.\]

\noi
As a result, we see that
$\{ \beta_n \}_{n \in \N}$ is a family of mutually independent complex-valued Brownian motions associated to the filtration $\{ \F_t \}_{t \ge 0}$. Let $\phi \in \HS(L^2;H^1)$, and we use the notation
\begin{align*}
\phi_n:=\phi e_n.
\end{align*}

\noi
Then, the space-time white noise $\xi$ is given by a distributional derivative (in time) of $W$ and thus we can express the stochastic convolution $\Psi$ as
\begin{align}
\label{defSC}
\Psi (t)&=-i \int_0^t S(t-t') \phi \xi (dt')    \notag\\
&:=-i\int_0^tS(t-t')\phi\,dW(t') = -i \sum_{n \in \N} \int_0^t S(t-t') \phi_n d \beta_n (t').
\end{align}
Note that the above definition is independent of the choice of the orthonormal basis. The next lemma tells us that $\Psi$ is continuous in time and satisfies a so-called ``Strichartz estimate". The result appeared implicitly in de Bouard-Debussche \cite{BD}, though we borrowed the precise statement from \cite{OPW} where the reader can find a detailed proof for~(i) in \cite{DZ} 
and~\cite{BD, OPW} for (ii).  We remark that this lemma holds for any spatial dimension $d\ge 1$.

\begin{lemma} \label{LEM:sccont}
Let $d\geq 1$, $T>0$, and $ s \in \R $.
Suppose that $ \phi \in \HS(L^2(\R^d);H^s(\R^d)) $. The following properties hold:
\begin{itemize}
\item[(i)]  $ \Psi \in C\big( [0,T];H^s(\R^d)\big)$ almost surely.
Moreover, for any finite $ p \geq 1 $,
there exists $ C=C(T,p)>0 $ such that 
 \[
\E \left[ \sup_{0 \le t \le T} \| \Psi (t) \|_{H^s(\R^d)}^{p} \right] \le C \| \phi \|_{\HS(L^2; H^s)}^{p}.
 \]
 \item[(ii)]  Given any $ 1 \leq q <\infty $ and finite $ r\geq 2 $ such that
$r\leq \frac{2d}{d-2}$ when $d\geq 3$, 
we have $ \Psi \in L^q([0,T];W^{s,r}(\R^d))$ almost surely.
 In particular, for any finite $p \geq 1$,
there exists $C = C(T,p) >0$ such that
 \[
\E \bigg[  \| \Psi \|_{L^q([0,T]; W^{s,r}(\R^d))}^{p} \bigg] \le C \| \phi \|_{\HS(L^2;H^s)}^{p}.
 \]
 \end{itemize}   

\end{lemma}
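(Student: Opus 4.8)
This is a standard result in the theory of stochastic convolutions, so the plan is to combine the \emph{factorization method} (for the pathwise continuity and the $H^s$-bound of part (i)) with a direct Gaussian argument (for the Strichartz-type bound of part (ii)). Throughout, write $\jb{\nb}=(1-\Dl)^{\frac12}$ and recall that $S(t)$ commutes with $\jb{\nb}^s$ and acts as an isometry on $H^s(\R^d)$. The key elementary observation is that, for each fixed $(t,x)$, the random variable $\jb{\nb}^s\Psi(t,x)$ is a complex Gaussian, being a Wiener integral of a deterministic integrand, so that by the It\^o isometry
\begin{align*}
\E\big|\jb{\nb}^s\Psi(t,x)\big|^2=\sum_{n\in\N}\int_0^t\big|\big(\jb{\nb}^s S(t-t')\phi_n\big)(x)\big|^2\,dt'.
\end{align*}
In particular all $L^p(\Om)$-moments of $\jb{\nb}^s\Psi(t,x)$ are comparable to its second moment.

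For part (i) I would use the factorization method. Fix $\al\in(0,\tfrac12)$ and set
\begin{align*}
Y(\s):=-i\int_0^\s(\s-t')^{-\al}S(\s-t')\phi\,dW(t').
\end{align*}
A stochastic Fubini argument, together with the Beta-function identity $\int_\s^t(t-\tau)^{\al-1}(\tau-\s)^{-\al}\,d\tau=\tfrac{\pi}{\sin(\pi\al)}$, yields the factorization formula
\begin{align*}
\Psi(t)=\frac{\sin(\pi\al)}{\pi}\int_0^t(t-\s)^{\al-1}S(t-\s)Y(\s)\,d\s.
\end{align*}
Since $\jb{\nb}^sY(\s)$ is an $L^2_x$-valued Gaussian, the It\^o isometry and the constraint $\al<\tfrac12$ give $\E\|Y(\s)\|_{H^s}^2\les\s^{1-2\al}\|\phi\|_{\HS(L^2;H^s)}^2$, and the equivalence of Gaussian moments upgrades this to $\E\|Y(\s)\|_{H^s}^p\les_p\s^{(1-2\al)p/2}\|\phi\|_{\HS(L^2;H^s)}^p$; integrating in $\s$ shows $Y\in L^p([0,T];H^s)$ almost surely with the corresponding moment bound. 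Because $S$ is an $H^s$-isometry, H\"older's inequality in time shows that the deterministic operator $R_\al h(t):=\int_0^t(t-\s)^{\al-1}S(t-\s)h(\s)\,d\s$ maps $L^p([0,T];H^s)$ boundedly into $C([0,T];H^s)$ whenever $\al>\tfrac1p$ (so that $(t-\cdot)^{\al-1}\in L^{p'}([0,t])$), the continuity following from strong continuity of $S$ and dominated convergence. Choosing $\al\in(\tfrac1p,\tfrac12)$, which is possible for $p>2$ and whence the case $p=2$ follows by Jensen's inequality, then gives part (i).

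For part (ii) I would argue directly and avoid the factorization. We may assume $q\ge2$, the case $q<2$ following from the finiteness of $[0,T]$. Since $p\ge\max(q,r)$, Minkowski's integral inequality lets me bring the $L^p(\Om)$-norm inside the space-time norm,
\begin{align*}
\Big(\E\,\|\Psi\|_{L^q([0,T];W^{s,r})}^p\Big)^{\frac1p}\le\Big\|\,\big\|\jb{\nb}^s\Psi(t,x)\big\|_{L^p(\Om)}\,\Big\|_{L^q_t([0,T];L^r_x)},
\end{align*}
and the equivalence of Gaussian moments replaces $\|\cdot\|_{L^p(\Om)}$ by $\|\cdot\|_{L^2(\Om)}$ up to a constant $C(p)$. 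Inserting the It\^o isometry above and using Minkowski's inequality in the combined variable $(n,t')$ (valid since $q,r\ge2$), I reduce matters to bounding
\begin{align*}
\bigg(\sum_{n\in\N}\Big\|\int_0^t\big\|S(t-t')\jb{\nb}^s\phi_n\big\|_{L^r_x}^2\,dt'\Big\|_{L^{q/2}_t([0,T])}\bigg)^{\frac12}.
\end{align*}
The inner time integral is nondecreasing in $t$, hence bounded on $[0,T]$ by $\int_0^T\|S(\tau)\jb{\nb}^s\phi_n\|_{L^r_x}^2\,d\tau$. For $2\le r\le4$ the pair $(\tfrac{r}{r-2},r)$ is admissible with $\tfrac{r}{r-2}\ge2$, so on the finite interval the homogeneous Strichartz estimate together with $L^{r/(r-2)}([0,T])\embeds L^2([0,T])$ gives $\int_0^T\|S(\tau)\jb{\nb}^s\phi_n\|_{L^r_x}^2\,d\tau\les_T\|\jb{\nb}^s\phi_n\|_{L^2}^2=\|\phi_n\|_{H^s}^2$. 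Summing in $n$ produces $\|\phi\|_{\HS(L^2;H^s)}$, and almost-sure membership in $L^q([0,T];W^{s,r})$ follows at once.

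The main obstacle is the factorization step in part (i): verifying the factorization identity rigorously requires a stochastic Fubini theorem, and the mapping property of $R_\al$ forces the two-sided constraint $\tfrac1p<\al<\tfrac12$, which is precisely why the supremum bound is obtained for $p>2$ first. In part (ii) the only delicate point is that the exponent pair $(q,r)$ need \emph{not} be admissible; this is handled by the embedding of Strichartz norms on the finite interval $[0,T]$, and it is exactly here that the restriction $2\le r\le4$ is used.
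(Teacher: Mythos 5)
The paper does not actually prove this lemma---it defers to de Bouard--Debussche \cite{BD} and to \cite{OPW} for a detailed proof---and your argument is correct and is essentially that standard proof: the Da Prato--Zabczyk factorization method for part (i), and for part (ii) the chain Minkowski (which is precisely what forces the hypothesis $p\ge\max(q,r)$), Gaussian moment equivalence, It\^o isometry, and Strichartz on the finite interval. The one caveat is that your claim that $\big(\tfrac{r}{r-2},r\big)$ is admissible is the $d=4$ admissibility relation, so part (ii) as written covers $d\le 4$ (for $d\ge 5$ this method needs $r\le\tfrac{2d}{d-2}$, not $r\le 4$); this is harmless here, since the paper only defines admissibility on $\R^4$ and only applies the lemma there.
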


\subsection{Perturbation lemma}\label{SUBSEC:Perb} Consider the defocusing energy critical NLS equation
\begin{align}\label{NLS1}
i\dt{w}+\Dl{w}=|{w}|^{\frac{4}{d-2}}{w}.
\end{align}
Global well-posedness and scattering for \eqref{NLS1} was proved by Colliander, Keel, Staffilani, Takaoka, and Tao \cite{CKSTT} for spatial dimension $d=3$. Later by Ryckman and Vi\c{s}an \cite{RV} and Vi\c{s}an \cite{V2, V} for $d\ge 4$. An important consequence of these works is that their constructed solutions satisfy a global space-time bounds in Strichartz norms. Specifically, if $ w $ is a
solution to the energy-critical NLS \eqref{NLS1} with initial data $ w_0 \in \dot{H}^1(\R^d)$.
Then, the following global-in-time bound holds:
\begin{align}\label{global-bound}
\|w\|_{\dot{S}^1(\R)}\leq C(\|w_0\|_{\dot{H}^1(\R^d)}).
\end{align}
In \cite{KOPV}, Killip, Oh, Pocovnicu, and Vi\c san  proved global well-posedness of the Gross-Pitaevskii equation \eqref{NLS} by utilising the space-time bounds \eqref{global-bound} in conjunction with a perturbation lemma on \eqref{NLS1}. We shall follow the same basic principles in their work and view \eqref{SNLS} as a energy-critical NLS \eqref{NLS1} with a perturbation on $ \R^4 $.
The key perturbation lemma used in \cite{KOPV} came from \cite[Theorem 3.8]{KV}, and we state this below:

\begin{lemma}[Perturbation lemma]\label{LEM:pert}
Suppose $w_0\in \dot{H}^1(\R^4)$, $ I $ be a compact time interval with $ |I| \leq 1$. Let $ \tilde{w}$ be a solution on $ I\times\R^4$ to the perturbed equation:
\begin{align*}
i\dt\wt{w}+\Dl\wt{w}=|\wt{w}|^{2}\wt{w}+e
\end{align*}
for some function $e.$ There exist functions $\eps_0(E_0,E',L)$ and $\bar{C}(E_0,E',L)$ mapping from $\R_+^3$ to $\R_+$, that are non-increasing in each argument, such that if
\begin{align}
\|\wt{w}\|_{L^6_{t,x}(I\times\R^4)}&\leq L,\label{P1}\\
\|\wt{w}\|_{L^\infty_t\dot{H}^1_x(I\times\R^4)}&\leq E_0,\label{P2}\\
\|\wt{w}(t_0)-w_0\|_{\dot{H}^1(\R^4)}&\leq E',\label{P3}
\end{align} 
for some $t_0\in I$ and positive quantities $L, E_0, E'$, and that
\begin{align}
\|S(t-t_0)(\wt{w}(t_0)-w_0)\|_{\dot{X}^1(I)}&\leq \eps,\label{P4}\\
\|\nb e\|_{\dot{N}^0(I)}&\leq \eps,\label{P5}
\end{align}
for some $ 0<\eps<\eps_0$. Then, there exists a solution $w$ to \eqref{NLS1} with initial data $ w_0 $ satisfying
\begin{align*}
\|w-\wt{w}\|_{L^6_{t,x}(I\times\R^4)}&\leq \bar{C}(E_0,E',L)\eps,\\
\|w-\wt{w}\|_{\dot{S}^1(I)}&\leq \bar{C}(E_0,E',L)E'\\
\|w\|_{\dot{S}^1(I)}&\leq \bar{C}(E_0,E',L).
\end{align*} 
\end{lemma}
\begin{remark}\label{Re:pert}\rm
By the Strichartz estimate, condition \eqref{P4} is redundant if $ E'=O(\eps). $
\end{remark}

\section{Energy-critical NLS with a perturbation}
\label{SEC:Pertub}

In this section, we consider the following defocusing energy-critical NLS with a perturbation:
\begin{align}\label{PNLS}
\begin{cases}
i\dt v+\Dl v=\mathcal N(v+f+1)\\
v|_{t=0}=v_0,
\end{cases}
\end{align}

\noi
for $\mathcal N(v+f+1)=(|v+f+1|^{2}-1)(v+f+1)$.
Here, $f$ is a given deterministic function and
satisfying certain regularity conditions.
By applying the perturbation lemma (Lemma \ref{LEM:pert}), 
we prove global existence for \eqref{PNLS}, 
assuming an a priori energy bound of a solution $v$ to \eqref{PNLS}.
See Proposition \ref{Pro:perturb2}.
In Section~\ref{SEC:global-ex},
we then present the proof of  
Theorem \ref{THM:0} by writing \eqref{SNLS}
in the form \eqref{PNLS}
(with $f = \Psi$) 
and verifying 
the hypotheses in Proposition \ref{Pro:perturb2}.

\subsection{Local well-posedness of the perturbed NLS}
By a standard application of the contraction mapping theorem, 
we have the following local well-posedness of 
the perturbed NLS \eqref{PNLS}. 
%Previously, G\'erard \cite{G1} constructed global solutions to Gross-Pitaevskii equation \eqref{SNLS2} on $\R^4$ for initial data in the energy space \eqref{Eng} with \textbf{small} energy. In fact, his argument is based on contraction mapping arguments in Strichartz spaces.
There is a similar argument by G\'erard [\cite{G1} Theorem 5.1] can be adapted into the following local well-posedness argument.
%More precisely,  G\'erard showed that ([\cite{G1} Proposition 2.3]) the energy space is kept invariant by the Schr\"odinger group $S(t)$. 
We keep the following proof as for reader's convenience.

\begin{proposition}[Local well-posedness of the perturbed NLS]
\label{PRO:PLWP}
Let $I_0$ to be the interval such that $I_0:=[t_0,t_0+T]\subseteq[0,\infty)$. Suppose that 
\begin{align}
\label{a1}
\begin{split}
\|\Re v_0\|_{L^2(\R^4)}+ \|v_0\|_{\dot{H}^1(\R^4)}&\leq R \\
\|\Re f\|_{L^\infty_{t}L^2_x(I_0\times \R^4)}+  
\| f \|_{L^{\infty}_{t}\dot{H}^1_x(I_0\times\R^4)}&\leq M,
\end{split}
\end{align}

\noi
for some $ R,M\geq1 .$
Then, there exists 
%$\dl>0$
%such that for every $1+v_0\in $
some small $ \eta_0=\eta_0(R,M)>0 $ and a compact interval $I\subseteq I_0$ containing $t_0$ such that if 
\begin{align*}
\|S(t-t_0)v_0\|_{\dot{X}^1(I)}+\|f\|_{\dot{X}^1(I)}\leq\eta,
\end{align*}

\noi
for some $ \eta \leq \eta_0 $.
Then, there exists a solution $ v\in C\big(I;H_{\textup{real}}^1(\R^4)+i \dot{H}_{\textup{real}}^1(\R^4)  \big)\cap\dot{X}^1(I) $ to \eqref{PNLS} with $ v(t_0)=v_0. $ Moreover, $v$ satisfies
\begin{align}
\label{LWP1}
\|v-S(t-t_0)v_0\|_{\dot{X}^1(I)}&\le\eta
\end{align}
\end{proposition}

\begin{remark}\rm\label{RM:LWP}
As a consequence of Proposition \ref{PRO:PLWP}. We can prove local well-posedness for the SNLS \eqref{SNLS2} in the space $C (I;\cE( \R^4)) \cap \dot{X}^1(I)  $ (where $ \cE( \R^4)), \dot{X}^1(I) $ are defined in \eqref{Eng} and \eqref{Xspace}).
\end{remark}

\begin{proof}
	
Firstly, we show that the map $\G$ defined by
\begin{equation*}
\G (v(t)) := S(t-t_0) v_0 -i  \int_{t_0}^t S(t -t') \mathcal N(v+f+1) (t') dt',
%\label{LWP2b}
\end{equation*}
	
\noi 
is a contraction on
\begin{equation*}
\label{a2}
\begin{split}
B_{\wt R, \eta} = \Big\{& v \in \dot{X}^{1} (I) 
\cap C\big(I; \dot{H}^1(\R^4)  \big)
:
%\\
%&
%\|\Re v\|_{L^\infty_{t}L^2_x(I\times \R^4)}+
\|v\|_{L^{\infty}_t \dot{H}_x^1(I\times\R^4)}\leq 2\wt R,
\ \|v\|_{\dot{X}^1(I )} \leq 2\eta
\Big\}.
\end{split}
\end{equation*}

\noi
Here, $\wt R :=\max\{R, M\}$.
Let $v_1,v_2,v_3\in B_{\wt R, \eta}$. 
Then, 
by Strichartz, H\"older, and Sobolev inequalities yeild
\begin{align}
\label{REF:LWP1}
\begin{split}
\bigg\| &\int_{0}^{t} S(t-t')\big(v_1\cj{v}_2v_3+v_1\cj{v}_2+v_1\big)(t')dt' \bigg\|_{\dot{S}^1(I)}\\
&\les \sum_{\{i,j,k\}=\{1,2,3\}}\|v_iv_j\nb v_k\|_{L^2_tL_x^{\frac{4}{3}}(I\times \R^4)} 
+\sum_{\{i,j\}=\{1,2\}}\|v_i\nb v_j\|_{L^{\frac{6}{5}}_tL_x^{\frac{12}{7}}(I\times \R^4)}\\
&\quad\quad+ \|\nb v_1\|_{L^{1}_tL_x^{2}(I\times \R^4)}\\
&\les \|v_1 \|_{\dot{X}^1(I)}\|v_2 \|_{\dot{X}^1(I)}\|v_3 \|_{\dot{X}^1(I)}
+ |I|^\frac12\|v_1 \|_{\dot{X}^1(I)}\|v_2 \|_{\dot{X}^1(I)}\\
&\quad\quad+|I|\|v_1 \|_{L^{\infty}_t\dot{H}^1_x(I\times \R^4)},
\end{split}
\end{align}
	%\begin{align}\label{v^3-estimate}
	%\bigg\| \int_{0}^{t} S(t-t')v^3(t')dt' \bigg\|_{\dot{X}^1(I)}&\leq \|v^2\nb v\|_{L^2_tL_x^{\frac{4}{3}}(I\times \R^4)}\notag\\
	%&\leq\|\nb v\|_{L^6_tL_x^{\frac{12}{5}}(I\times \R^4)}\|v\|^2_{L^6_{t,x}(I\times \R^4)}\notag\\
	%&\leq \|\nb v \|^3_{L^6_tL_x^{\frac{12}{5}}(I\times \R^4)}\notag\\
	%&=\|v\|^3_{\dot{X}^1(I)}.
	%\end{align}
	%Next, by Strichartz estimate, H\"older in time, and Sobolev embedding $ W^{1,\frac{12}{5}}(\R^4)\subset L^6(\R^4) $, we have
	%\begin{align}\label{v^2-estimate}
	%\bigg\| \int_{0}^{t} S(t-t')v^2(t')dt' \bigg\|_{\dot{X}^1(I)}
	%&\lesssim \|v\nb v\|_{L^{\frac{6}{5}}_tL_x^{\frac{12}{7}}(I\times \R^4)}\notag\\
	%&\lesssim\ |I|^{\frac{1}{2}}\ | v\|_{L^6_{t,x}(I\times \R^4)}\|\nb v\|_{L^6_{t}L_x^{\frac{12}{5}}(I\times \R^4)}\notag\\
	%&=|I|^{\frac{1}{2}}\|v\|^2_{\dot{X}^1(I)}.
	%\end{align}
	
	%Finally,  by Strichartz estimate, H\"older in time, we have
	%\begin{align}\label{v-estimate}
	%\bigg\| \int_{0}^{t} S(t-t')v(t')  dt' \bigg\|_{\dot{X}^1(I)}
	%&\leq \|\nb v\|_{L^{1}_tL_x^{2}(I\times \R^4)}\notag\\
	%&\lesssim\ |I|\cdot \|\nb v\|_{L^{\infty}_{t}L^2_x(I\times \R^4)}\notag\\
	%&=|I|\cdot\|v\|_{L^{\infty}_t\dot{H}^1_x(I\times \R^4)}.
	%\end{align}

\noi
where $\dot S^1(I)$ denotes the Strichartz spaces with admissible pairs $(6, \tfrac{12}{5})$ and $(\infty,2)$.
Now, in the view of \eqref{pert} the nonlinearility $\mathcal N(v+f+1)$ can be expressed as
\begin{align}
\label{REF:LWP2}
\begin{split}
(|v+f+1|^2-1)(v+f+1)
&=|v|^2v+|f|^2v+2\Re{(\cj{v}f)}v+2\Re{(f+v)}v \\
&\phantom{=}+|v|^2f+|f|^2f+2\Re{(\cj{v}f)}f+2\Re{(f+v)}f \\
&\phantom{=}+|v|^2+|f|^2+2\Re{(\cj{v}f)}+2\Re{(f+v)}.
\end{split}
\end{align}

\noi
We
choose $\eta_0 \ll \wt R^{-1} \leq 1$ and $ |I|\le \min \{1, \eta^3 \wt{R}^{-1} \} $ in the following.
Then,  it follows from \eqref{REF:LWP1} and \eqref{REF:LWP2}  that
there exists small $\eta_0 > 0$ such that
\begin{align*}
%\label{LWP1b}
\|\G (v)\|_{\dot X^1(I)}
&  \leq
\|S(t-t_0)v_0\|_{\dot X^1(I)}
+  \|\mathcal N(v+f+1)\|_{\dot N^1(I)} \notag \\
& \leq \|S(t-t_0)v_0\|_{\dot{X}^1(I)} +  C\bigg( \| v \|^3_{\dot{X}^1(I)}+|I|^{\frac{1}{2}}\| v \|^2_{\dot{X}^1(I)} +|I|\cdot\| v \|_{L^{\infty}_t\dot{H}^1_x(I\times \R^4)}   \notag\\
&\quad+\| f \|^3_{\dot{X}^1(I)}+|I|^{\frac{1}{2}}\| f \|^2_{\dot{X}^1(I)} +|I|\cdot\| f \|_{L^{\infty}_t\dot{H}^1_x(I\times \R^4)}\bigg) \notag \\
&\leq \eta + 2C(2\eta^3 +\eta^{\frac72}   )
%\leq \eta + C \eta^{3}
\leq 2\eta,
\end{align*}

\noi
for $v\in B_{\wt R,\eta}$ and 
 $\eta \leq \eta_0$ sufficiently small.
Similarly, we have
\begin{align*}
\|\G (v)\|_{L^{\infty}_t\dot{H}^1_x(I\times \R^4)}
& \leq  R + C \eta^{3}
\leq 2 \wt R.
%\label{a3}
\end{align*}

\noi
Hence, $ \G $ maps $B_{\wt R, \eta}$ to $B_{\wt R, \eta}$. 
Moreover, the difference estimate follows analogously.
Take $ v_1, v_2 \in B_{ \wt R,\eta}$, we have
\begin{align*}
 \| \G (v_1) - \G (v_2) \|_{L^{\infty}_t\dot{H}^1_x(I\times \R^4)\cap \dot X^1(I)} 
%&\leq C\| \mathcal N(v_1+f+1)-\mathcal N(v_2+f+1)\|_{\dot N^1(I)}\\
%	& \leq C \bigg( \| |v_1^2|v_1-|v_2|^2v_2 \|_{\dot{N}^1(I)}+\|g(v_1,f)-g(v_2,f)\|_{\dot{N}^1(I)}\bigg)  \\
%	& \leq C \bigg( \| v_1 \|_{\dot{X}^1(I)} + \| v_2 \|_{\dot{X}^1(I)} + \| f \|_{\dot{X}^1(I)} \bigg)^2 \times \| v_1-v_2 \|_{\dot{X}^1(I)} \notag \\
%	&\quad + C |I|^{\frac12}\bigg( \| v_1 \|_{\dot{X}^1(I)} + \| v_2 \|_{\dot{X}^1(I)} + \| f \|_{\dot{X}^1(I)} \bigg)  \times \| v_1-v_2 \|_{\dot{X}^1(I)} \notag \\
%	&\quad +C|I|\bigg(  \|v_1-v_2\|_{L^{\infty}_t\dot{H}^1_x(I\times \R^4)} \bigg)\\
	%&\leq C (\eta^2+\tfrac {|I|^{\frac12}}{2} \eta) \| v_1-v_2 \|_{\dot{X}^1(I)}+C|I| \| v_1-v_2 \|_{L^{\infty}_t\dot{H}^1_x(I\times \R^4)}\notag \\
& \leq \frac 12
\| v_1 - v_2 \|_{L^{\infty}_t\dot{H}^1_x(I\times \R^4)\cap \dot X^1(I)} .
\end{align*}

\noi
Therefore, $\G$ is a contraction on $B_{ \wt R, \eta}$. 
Now, we show that solution $v$ we contructed above belongs in to a smaller space:
 \[v\in C\big(I;H_{\textup{real}}^1(\R^4)+i \dot{H}_{\textup{real}}^1(\R^4)  \big).\]

\noi
By using the energy \eqref{Eng1} at initial time (suppose $t_0=0$), H\"older's, and Sobolev inequality, we observe the following:
\begin{align*}
E(v+f+1)(0)&=\frac{1}{2}\int_{\R^4} |\nb (v_0+f)|^2dx+\frac{1}{4}\int_{\R^4} ( |v_0+f|^2+2\Re(v_0+f)   )^2dx\\
%&\leq \frac12( \|v \|^2_{\dot H^1}+ \|f\|^2_{\dot H^1})+\frac14 (\|v\|_{L^4}^4 + \|f\|_{L^4}^4 )\\
%&\quad +\frac12 (\| |v|^2 \|_{L^2}^2+\| |f|^2 \|_{L^2}^2            ) \\
%&\quad +2\{\|\Re v\|_{L^2} (\|\Im f\|_{\dot H^1}\|\Im v\|_{\dot H^1} +\|\Re f\|_{\dot H^1}\|\Re v\|_{\dot H^1} )   \}\\
%&\quad+ (\|v\|_{\dot H^1}^2+ \|f\|_{\dot H^1}^2)( \|\Re f\|_{\dot H^1}\|\Re v\|_{\dot H^1} +\|\Im f\|_{\dot H^1}\|\Im v\|_{\dot H^1}   ) \\
%&\quad+ \|\Re(f\cj v)\|_{L^2} +\|\Re v\|_{L^2}+\|\Re f\|_{L^2}\\
& \leq C( \|\Re v_0\|^2_{L^2}+\|\Re f\|^2_{L^2}+ \|v_0\|_{\dot H^1}^4 + \|f\|_{\dot H^1}^4 +\|v_0 \|^2_{\dot H^1}+ \\
&\quad +
\|f\|^2_{\dot H^1} +\|\Re v_0\|_{L^2}  \|f\|_{\dot H^1} \|v_0 \|_{\dot H^1} +\|f\|_{\dot H^1} \|v_0\|_{\dot H^1}\\
&\quad + \|f\|_{\dot H^1}^3 \|v_0 \|_{\dot H^1}  +\|f\|_{\dot H^1} \|v_0 \|_{\dot H^1}^3  )
\end{align*}

\noi
for some canstant $C>0$.
Therefore, by
the assumption \eqref{a1} and
 for some $\theta>0$, we have the following:
\begin{align}
\label{eng0}
E(v+f+1)(0)\leq C(R+M)^{\theta}<\infty.
\end{align}

\noi
On the other hand, by 
the conservation of energy we observe the following:
\begin{align*}
%\|\Re v\|_{L^\infty_tL_x^2}^2\leq 
E(v+f+1)(0)&=
E(v+f+1)(t)\\
&\geq 
\frac{1}{4}\int_{\R^4}  ( |v+f|^2+2\Re(v+f)   )^2dx\\
%&\geq \frac{1}{4}\int_{\R^4}  |v+f|^2\Re(v+f)  +(\Re(v+f) )^2 dx\\
&\geq \frac{1}{4}\int_{\R^4} (\Re v)^2 + 2\Re v \Re f + \Re(v+f) |v+f|^2 dx.
\end{align*}

\noi
Hence, we rearrange above inequality yields:
\begin{align*}
\|\Re v\|_{L^2(\R^4)}^2\leq 
E(v+f+1)(0)-
\frac{1}{4}\int_{\R^4}   2\Re v \Re f + \Re(v+f) |v+f|^2 dx.
\end{align*}

\noi
 Now, by Young's inequality for every $\eps > 0$, there exists some large constant $C(\eps)=C_{\eps}$ such that
\begin{align*}
\|\Re v\|_{L^2(\R^4)}^2\leq 
E(v+f+1)(0)-
\big( \eps \|\Re v\|_{L^2(\R^4)}^2
+C_{\eps} \|\Re f\|_{L^2(\R^4)}^2\\
+\eps \|\Re v\|_{L^2(\R^4)}^2
+C_{\eps} \|v+ f\|_{L^4(\R^4)}^4 
+\eps \|\Re f\|_{L^2(\R^4)}^2 \big).
\end{align*}

\noi
By Sobolev inequality and rearranging above inequality we obtain:
\begin{align*}
(1-2\eps)\|\Re v\|_{L^2(\R^4)}^2\leq 
E(v+f+1)(0)-
\big(
C_{\eps} \|\Re f\|_{L^2(\R^4)}^2\\
+C_{\eps} \|v+ f\|_{\dot H^1(\R^4)}^4 
+\eps \|\Re f\|_{L^2(\R^4)}^2 \big).
\end{align*}

\noi
Finally, by taking the supreme in time, \eqref{eng0}, and \eqref{a1}, we conclude:
\begin{align*}
\|\Re v\|_{L^{\infty}_tL^2_x(I\times \R^4)}^2&\leq 
(1-2\eps)^{-1}
\big(C(R+M)^{\theta}-
C_{\eps} M^2 
-C_{\eps}\wt R^4 
-\eps M^2 \big)\\
&<\infty
\end{align*}

\noi

\noi
This shows that for the solution $v$ we constructed:
\[
v\in C\big(I;H_{\textup{real}}^1(\R^4)+i \dot{H}_{\textup{real}}^1(\R^4)  \big).
\]
The estimate \eqref{LWP1} is a direct consequence of the above estimates.
\end{proof}

\subsection{Global existence of solutions to the perturbed NLS }\label{SEC:global-ex}
In this subsection, we prove the long time existence of solutions to the perturbed NLS \eqref{PNLS}. 
Given $ T>0 $, we assume that there exist $ C, \theta>0 $ such that
\begin{align}
\label{condition}
%\|f\|_{L^\infty_t\dot{H}^1_x (I\times \R^4)}+
\|f\|_{ \dot{X}^1(I)}\le C |I|^\theta;
\end{align}
for any interval $ I\subset [0,T]$. Then, Lemma \ref{PRO:PLWP} guarantees existence of a solution to the perturbed NLS \eqref{PNLS}, at least for a short time. The following proposition prove the long time existence under some hypotheses.

\begin{proposition}\label{Pro:perturb2}
Let $T>0$ be given, assume the following conditions hold:

\smallskip
\noi
\textup{(i)} 
Let $ f\in  C\big(I;H_{\textup{real}}^1(\R^4)+i \dot{H}_{\textup{real}}^1(\R^4)  \big)\cap\dot{X}^1(I) $ satisfy \eqref{condition}.

\smallskip
\noi
\textup{(ii)} 
Given a solution $v$ to \eqref{PNLS}, we have the following a priori bound
\begin{align}\label{condition2}
\|v\|_{L^\infty_t\dot{H}^1_x([0,T]\times\R^4)}\le R.
\end{align}
\noi
Then, there exists a time $\tau=\tau(R, \theta)>0$ such that given any $t_0\in [0,T)$, a solution $v$ of \eqref{PNLS} exists on $[t_0, t_0+\tau]\cap [0,T]$. This implies that $v$ in fact exists in the entire interval $[0,T]$, as $t_0$ is arbitrary.
\end{proposition}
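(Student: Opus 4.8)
The plan is to extract a local existence time $\tau$ depending only on $R$ and $\theta$, and then to cover $[0,T]$ by iterating this fixed-length step; the local time is produced by comparing a solution $v$ of \eqref{PNLS} against the global solution of the unperturbed energy-critical NLS \eqref{NLS1}. Fix $t_0\in[0,T)$ and let $v$ be the maximal solution on $[t_0,t_0+T^\ast)$ supplied by Proposition \ref{PRO:PLWP}. Writing \eqref{PNLS} in the form $i\dt v+\Dl v=|v|^2v+e$ with $e=g(v,f)$ as in \eqref{pert} (with $\Psi$ replaced by $f$), I observe that $v$ is an approximate solution of \eqref{NLS1} whose error $e$ consists solely of terms that are at most \emph{quadratic} in $v$: the $f$-free lower-order terms $2\Re(v)v+|v|^2+2\Re(v)$, together with terms carrying at least one factor of $f$. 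I then introduce the genuine solution $w$ of \eqref{NLS1} with $w(t_0)=v(t_0)$; since $\|v(t_0)\|_{\dot H^1}\le R$ by \eqref{condition2}, the global space-time bound \eqref{global-bound} gives $\|w\|_{\dot S^1(\R)}\le C(R)$, and in particular $\|w\|_{L^6_{t,x}(\R)}\le L_0=L_0(R)$.

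The engine of the argument is an error estimate with room to spare in $|I|$. On any $I=[t_0,s]\subseteq[t_0,t_0+T^\ast)$ with $|I|\le\tau$, each summand of $e$ can be placed in $\dot N^0(I)$ gaining a positive power of $|I|$: the $f$-free quadratic and linear terms are controlled using only $\|v\|_{L^\infty_t\dot H^1_x}\le R$ and H\"older in time, yielding $C(R)(|I|^{\frac12}+|I|)$, while each term containing $f$ is handled by placing $f$ in $\dot X^1(I)$ or $L^\infty_t\dot H^1_x(I)$ and invoking \eqref{condition}, which contributes a factor $|I|^\theta$. I therefore expect a bound of the shape $\|\nb e\|_{\dot N^0(I)}\le C(R)\,|I|^\gamma\big(1+\|v\|_{\dot X^1(I)}^2\big)$ with $\gamma=\min(\theta,\tfrac12)>0$.

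The main obstacle is a circularity: Lemma \ref{LEM:pert} requires the a priori bound $\|v\|_{L^6_{t,x}(I)}\le L$ as a hypothesis, which is exactly the control we are after, and the error estimate above still involves $\|v\|_{\dot X^1(I)}$. I would resolve this by a continuity argument in $Z(s):=\|v\|_{L^6_{t,x}([t_0,s])}$, which is continuous with $Z(t_0)=0$. On the set where $Z(s)\le 2L_0$, a standard subdivision of $[t_0,s]$ into finitely many (depending only on $R$) subintervals of small $L^6_{t,x}$-norm first upgrades this to $\|v\|_{\dot X^1([t_0,s])}\le C(R)$, which makes the error estimate effective: $\|\nb e\|_{\dot N^0([t_0,s])}\le C(R)|I|^\gamma=:\eps$. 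Choosing $E_0=R$, $E'=0$ (so \eqref{P4} is vacuous by Remark \ref{Re:pert}), $L=2L_0$, and $\tau=\tau(R,\theta)$ small enough that $\eps<\eps_0(R,0,2L_0)$, Lemma \ref{LEM:pert} applies and gives $\|v-w\|_{L^6_{t,x}([t_0,s])}\le\bar C\eps$. Hence $Z(s)\le\|w\|_{L^6_{t,x}}+\bar C\eps\le L_0+\bar C\eps<2L_0$ once $\tau$ is small, and this strict improvement propagates by continuity over all of $[t_0,\min(t_0+\tau,t_0+T^\ast))$.

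Finally, the uniform bound $Z\le2L_0$ and the accompanying $\|v\|_{\dot S^1}\le C(R)$ from \eqref{P8} preclude blow-up of the $\dot X^1$-norm, so the standard blow-up alternative for \eqref{PNLS} forces $T^\ast>\tau$; thus $v$ exists on $[t_0,t_0+\tau]\cap[0,T]$. Since $\tau$ is independent of $t_0$ and the a priori bound \eqref{condition2} holds throughout $[0,T]$, partitioning $[0,T]$ into $\lceil T/\tau\rceil$ intervals of length at most $\tau$ and iterating this step yields a solution on all of $[0,T]$.
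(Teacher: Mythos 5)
Your proposal is correct in outline and shares the paper's core idea---compare $v$ with the global solution $w$ of \eqref{NLS1} with data $w(t_0)=v(t_0)$, use \eqref{global-bound}, and invoke Lemma \ref{LEM:pert} with $e=g(v,f)$---but it organizes the argument in a genuinely different way. The paper never runs a bootstrap in $\|v\|_{L^6_{t,x}}$ and never needs a blow-up alternative: it subdivides $[t_0,T]$ into $J=J(R,\eta)$ intervals $I_j$ on which $w$ and $f$ are $\eta$-small in $\dot{X}^1$, and on each $I_j$ it first \emph{constructs} $v$ via the local theory (Proposition \ref{PRO:PLWP}, whose smallness hypothesis is verified by \eqref{lin-evol} together with the closeness $\|v(t_j)-w(t_j)\|_{\dot{H}^1}\le C_{j-1}\eps$ inherited from the previous interval), so that \eqref{P1} holds with the \emph{small} constant $L=C'\eta$; the perturbation lemma is then applied once per interval, with tolerances $E'=C_{j-1}\eps$ and constants $C_j=\bar{C}^{j+1}\widetilde{C}^{\,j}$ growing geometrically, and $\eps$ is chosen at the very end, small depending only on $R$ and $\eta$, which is possible because $J$ is finite. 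You instead make a \emph{single} application of the lemma on the bootstrap interval $[t_0,s]$, with the \emph{large} constant $L=2L_0(R)$ and $E'=0$ (legitimate inputs: the lemma tolerates large $L$ at the price of smaller $\eps_0$ and larger $\bar{C}$, and with $E'=0$ condition \eqref{P4} is vacuous, consistent with Remark \ref{Re:pert}), with \eqref{P1} supplied by the continuity argument in $Z(s)$ and the error estimate made effective by your upgrade step. This buys you freedom from the interval-by-interval bookkeeping and the geometric accumulation of constants, but it costs you two ingredients the paper's route does not need: (a) the existence of a maximal solution and a continuation (blow-up) criterion for \eqref{PNLS}---that a finite $\dot{S}^1$ bound up to $T^\ast$ precludes maximality---which is standard but is nowhere stated in the paper and would have to be proven from Proposition \ref{PRO:PLWP} (essentially by the computation \eqref{lin-evol} applied to $v$ itself, letting the left endpoint tend to $t_0+T^\ast$); and (b) the upgrade lemma converting the $L^6_{t,x}$ bound on $v$ into an $\dot{X}^1$ bound, a standard Strichartz absorption argument which is justified because $\|v\|_{\dot{X}^1([t_0,s])}<\infty$ for $s<t_0+T^\ast$. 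Provided you are willing to supply those two (routine) lemmas, your argument closes and yields the same $\tau=\tau(R,\theta)$.
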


\begin{proof}
Let $v$ be the local solution to the NLS \eqref{PNLS} (obtained from Proposition \ref{PRO:PLWP}). The main idea is to view \eqref{PNLS} as a perturbation to the energy-critical cubic NLS \eqref{NLS1}on $ \R^4 $, that is, regard $ v $ as $ \wt w $ in Lemma \ref{LEM:pert} with
\[
e=g(v,f),
\]
where $ g(v,f) $ as in \eqref{pert}. The argument follows closely in \cite{KOPV}.
	
Let $ w $ be the global solution to the energy-critical cubic NLS \eqref{NLS1} with initial data $ w(t_0)=v(t_0)=v_0 $. Then, by assumption $\|w(t_0)\|_{\dot{H}^1}\le R$, and so by \eqref{global-bound}
\[\|w\|_{\dot{X}^1(\R)}\les R.\]
This, together with assumption \textup{(i)}, infer that we can divide the interval $[t_0,T] $ into $ J = J(R ,\theta, \eta) $ many subintervals
$ I_j = [t_j, t_{j+1}] $ so that
\begin{align}\label{divideinte}
\|w\|_{\dot{X}^1(I_j)}+\|f\|_{\dot{X}^1(I_j)}&\le \eta
\end{align}
for some $ \eta\ll \eta_0 $, where $\eta_0$ is dictated by Proposition \ref{PRO:PLWP}. We also write $ [t_0, t_0+\tau] = \bigcup_{j=0}^{J'} ([0, t_0+\tau]\cap I_j) $
for some $ J' \leq J $, where $ [t_0, t_0+\tau] \cap I_j\not=\emptyset $ for $ 0\leq  j  \leq  J' $. 
	
We would like to apply Proposition \ref{LEM:pert} on each interval $I_j$ with $e=g(v,f)$. Starting with $j=0$, we see that \eqref{P2} is automatically satisfied with $E_0=R$ by assumption \textup{(ii)} and \eqref{P3} holds trivially with, say, $E'=1$ since $v(t_0)=w(t_0)$; this also infers that the condition \eqref{P4} holds (for any $\eps$) by the Strichartz estimate. We now turn to \eqref{P1}. Since the nonlinear evolution $ w $ is small on $ I_j $, the linear evolution
$ S(t-t_j) w(t_j) $ is also small on $ I_j $. Indeed, by rearranging the Duhamel formula, we have
\[
S(t-t_j)w(t_j)=w(t)+i\int_{t_j}^{t}S(t-t')\big(|w|^2w\big)(t')dt'
\]
for any $ t\in I_j$; together with the Strichartz, H\"older, Sobolev inequalities, and \eqref{divideinte} we obtain
\begin{align}\label{lin-evol}
\begin{split}
\|S(t-t_j)w(t_j)\|_{\dot{X}^1(I_j)}&\le \|w\|_{\dot{X}^1(I_j)}+C\|w^2\nb w\|_{L^2_tL_x^\frac{4}{3}(I_j\times \R^4)}\\
&\le \eta + C\|\nb w\|_{L^6_tL^{\frac{12}{5}}_x(I_j\times \R^4)}\|w\|^2_{L^6_{t,x}(I_j\times \R^4)}\\
&\le \eta+C\eta^3\\
&\le 2\eta,
\end{split}
\end{align}
since $\eta\ll \eta_0\le 1$. By Proposition \ref{PRO:PLWP} together with \eqref{divideinte} and \eqref{lin-evol} for $j=0$, $v$ exists on the interval $I_0$, moreover,
\begin{align*}
\|v\|_{\dot{X}^1(I_0)}\le \|S(t-t_0)v_0\|_{\dot{X}^1(I_0)}+\|v-S(t-t_0)v_0\|_{\dot{X}^1(I_0)}\le 6\eta
\end{align*}
Thus by the Sobolev embedding $\dot{W}^{1,\frac{12}{5}}(\R^4)\subset L^6(\R^4) $, we have $ \|v\|_{L^6_{t,x}(I_0\times\R^4)}\leq C'\eta $ for some absolute constant $ C'.$ Therefore, \eqref{P1} in Lemma \ref{LEM:pert} is satisfied with $ L=C'\eta .$ 
Let us now verify \eqref{P5}, that is, we need to estimate $\|\nb e\|_{\dot{N}^0(I_0)}=\|\nb g(v,f)\|_{\dot{N}^0(I_0)}$. In view of \eqref{pert}, we distribute the derivative to each term and apply the Strichartz estimate to each contribution, and put the cubic, square and linear terms in $L^2_tL^\frac{4}{3}_x$, $L^\frac{6}{5}_tL^\frac{12}{7}_x$ and $L^1_tL^2_x$ respectively. We then use the H\"older and Sobolev inequalities to put each term in $\dot{X}^1(I)$ (as seen in \eqref{REF:LWP1}). From \eqref{condition} and \eqref{condition2} we obtain
\begin{align}\label{REF:GWP2}
\begin{split}
\|\nb e\|_{\dot{N}^0(I_0)}
	%&\lesssim \bigg(\|v^2\nb v\|_{L^2_tL^{\frac{4}{3}}_x(I_0\times\R^4)}+\|\Psi^2\nb \Psi\|_{L^2_tL^{\frac{4}{3}}_x(I_0\times\R^4)}\bigg)\notag\\
	%&\quad +\bigg( \|v\nb v\|_{L^{\frac{6}{5}}_tL_x^{\frac{12}{7}}(I_0\times\R^4)} +\|\Psi\nb \Psi\|_{L^{\frac{6}{5}}_tL^{\frac{12}{7}}_x(I_0\times\R^4)} \bigg)\notag\\
	%&\quad +\bigg( \|\nb v\|_{L^{1}_tL_x^{2}(I_0\times\R^4)} +\|\nb \Psi\|_{L^{1}_tL^{2}_x(I_0\times\R^4)} \bigg)\notag\\
&\les\|f\|_{\dot{X}^1(I_0)}^3
+ |I_0|^\frac12\Big(\|v \|_{\dot{X}^1(I_0)}^2+\|f \|_{\dot{X}^1(I_0)}^2\Big)\\
&\quad+|I_0|\Big(\|v \|_{L^{\infty}_t\dot{H}^1_x(I_0\times \R^4)}+\|f \|_{L^{\infty}_t\dot{H}^1_x(I_0\times \R^4)}\Big)\\
&\les |I_0|^{3\theta}+|I_0|^{\frac{1}{2}}(\eta^2+|I_0|^{2\theta}) + |I_0|(R+ M )\\
&\leq C(R,M,\eta) \tau^{\theta_0}
\end{split}
\end{align}

\noi
for some $\theta_0=\theta_0(\theta)>0$. Let $ \eps\in(0, \eps_0) $ to be chosen later, where $\eps_0=\eps_0(R, C', \eta)$ is dictated by Lemma \ref{LEM:pert}. We choose $ \tau=\tau(\eps,\theta, R) $ sufficiently small so that
\begin{align*}
\|\nb e\|_{\dot{N}^0(I_0)}\leq \eps.
\end{align*} 
This verifies \eqref{P5}. Therefore, all hypotheses of Lemma \ref{LEM:pert} are satisfied on the interval $ I_0 $, with $L=C'\eta$, $E_0=R$ and $E'=1$. Hence we obtain
\begin{align}\label{P7b}
\|w-v\|_{\dot{S}^1(I_0)}\leq \bar{C}(R, 1, C'\eta)\eps=:C_0(R,\eta)\eps.
\end{align}

Next step, we consider the second interval $ I_1 $. Again, \eqref{P2} is satisfied automatically with $E_0=R$ by assumption. Since the pair $(\infty, 2)$ is admissible, \eqref{P7b} infers that
\[\|w(t_1)-v(t_1)\|_{\dot{H}^1}\le C_0\eps.\]
By choosing $\eps=\eps(R,\eta)$ sufficiently small, \eqref{P3} holds with $E'=C_0 \eps$. Turning to \eqref{P1}, by the Strichartz inequality, \eqref{P7b} and \eqref{lin-evol}, we have
\begin{align}\label{P4c}
\begin{split}
\|S(t-t_1)v(t_1)\|_{\dot{X}^1(I_1)}
&\le	\big\|S(t-t_1)\big[v(t_1)-w(t_1) \big]\big\|_{\dot{X}^1(I_1)}
+\|S(t-t_1)w(t_1)\|_{\dot{X}^1(I_1)}\\
&\le \|w(t_1)-v(t_1)\|_{\dot{H}^1}+2\eta\\
&\leq C_0(R,\eta)\eps+2\eta\\
&\le 3\eta
\end{split}
\end{align}
	%
	%Then, proceeding as in \eqref{v-I_0} and using \eqref{lin-evol}, \eqref{P3c} and \eqref{Psi-smallness},
	%\begin{align*}
	%\|v\|_{\dot{X}^1(I_1)}&\lesssim \|S(t-t_1)v(t_1)\|_{\dot{X}^1(I_1)}+\bigg( \|v\|^3_{\dot{X}^1(I_0)} + \|\Psi\|^3_{\dot{X}^1(I_0)} \bigg) \notag\\
	%&\quad+T^{\frac{1}{2}}\bigg( \|v\|^2_{\dot{X}^1(I_0)}+\|\Psi\|^2_{\dot{X}^1(I_0)} \bigg) + T\bigg( \|v\|_{L^{\infty}_t \dot{H}^1_x}   + \|\Psi\|_{L^{\infty}_t \dot{H}^1_x}\bigg)\notag\\
	%&\lesssim \|S(t-t_1)w(t_1)\|_{\dot{X}^1(I_1)}+\|S(t-t_1)\big[v(t_1)-w(t_1)\big]\|_{\dot{X}^1(I_1)}\\
	%&\quad+\bigg( \|v\|^3_{\dot{X}^1(I_0)} + \|\Psi\|^3_{\dot{X}^1(I_0)} \bigg)+T^{\frac{1}{2}}\bigg( \|v\|^2_{\dot{X}^1(I_0)}+\|\Psi\|^2_{\dot{X}^1(I_0)} \bigg) \\
	%&\quad + T\bigg( \|v\|_{L^{\infty}_t \dot{H}^1_x}   + \|\Psi\|_{L^{\infty}_t \dot{H}^1_x}\bigg)\notag\\
	%&\lesssim \eta + C(C_H,\eta)\eps+\|v\|^3_{\dot{X}^1(I_1)}+T^{\frac{1}{2}}\|v\|^2_{\dot{X}^1(I_1)}+TC^{\frac{1}{2}}_H.
	%\end{align*}
provided 
\begin{align}\label{I1Condition1}
C_0\eps<\eta.
\end{align}
If this holds, then by \eqref{P4c}, $v$ exists on the interval $I_0$, moreover,
\begin{align*}
\|v\|_{\dot{X}^1(I_1)}\le \|S(t-t_1)v(t_1)\|_{\dot{X}^1(I_1)}+\|v-S(t-t_1)v(t_1)\|_{\dot{X}^1(I_1)}\le 8\eta
\end{align*}
By the Sobolev inequality, we see that \eqref{P1} is satisfied with $ L = C\eta $ as before. Now, for \eqref{P4}, by the Strichartz estimate and \eqref{P7b}, we have
\begin{align*}
\|S(t-t_1)(v(t_1)-w(t_1))\|_{\dot{X}^1(I_1)}&\leq \widetilde{C}C_0\eps
\end{align*}
where $\widetilde{C}$ is the absolute constant coming from the Strichartz estimate. Then, \eqref{P4} is satisfied as long as
\begin{align}\label{I1Condition2}
\widetilde{C}C_0\eps<\eps_0(R,1,C'\eta).
\end{align}
Lastly, we argue as in \eqref{REF:GWP2} to obtain
\begin{align*}
\|\nb e\|_{\dot{N}^0(I_1)}\le \eps\le \widetilde{C}C_0\eps,
\end{align*}
without needing to change $\tau = T(\eps, \theta, R) $. Hence, \eqref{P5} is satisfied provided \eqref{I1Condition1} and \eqref{I1Condition2} hold, which can be done by shrinking $\eps=\eps(R,\eta)$ if necessary. Hence Lemma \ref{LEM:pert} infers that
\begin{align*}
\|v-w\|_{\dot{S}^1(I_1)}\leq \bar{C}(R, 1,C'\eta)\widetilde{C}C_0\eps=:{C}_1(R,\eta)\eps.
\end{align*}
We now recursively define $C_j(R,\eta):=\bar{C}(R,1,C'\eta)\widetilde{C}C_{j-1}$ for $1\le j\le J'$. In other words, $C_j(R,\eta)= \bar{C}(R,1, C'\eta)^{j+1}\widetilde{C}^j$. Arguing iteratively, we have
\begin{align*}
\|v-w\|_{\dot{S}^1(I_j)}\leq C_j\eps
\end{align*}
\noi
as long as 
\begin{align}
\label{IjCondition}
\begin{split}
C_{j-1}\eps&< \eta,\\
C_j\eps&<\eps_0(R,1,C'\eta).
\end{split}
\end{align}
Since $C_j$ is increasing in $j$, we just need to ensure that \eqref{IjCondition} holds for $j=J'$. Recalling that $J'\le J=J(R,\eta)$, we see that \eqref{IjCondition} holds for all $j$ provided that $ \eps$ is chosen sufficiently small, depending only on $R$ and $\eta$. In particular, we have constructed a solution $v$ in the entire interval $[t_0,t_0+\tau]$, where $\tau=\tau(R,\eta,\eps)$. This proves the proposition.
\end{proof}

\section{Proof of the main theorem}
\label{SEC:MainTheorem}
We present the proof of Theorem \ref{THM:0} in this section. The first objective is to obtain an a priori bound for the energy of the solution. Armed with this bound as well as tools from the previous sections, we prove global existence by an iterative application of the perturbation lemma (Lemma \ref{LEM:pert}). Finally, we conclude the argument by proving unconditional uniqueness.

\subsection{Bound on the energy}\label{SEC:energy}
Recall the definition of the energy $E(u)(t)$ from \eqref{energy}. Our goal in this subsection is to state and prove a priori bound on the energy. This a priori bound follows
from Ito's lemma and the Burkholder-Davis-Gundy inequality.
In order to justify an application of Ito's lemma, 
one needs to go through a certain approximation argument.
See   [\cite{BD} Proposition 3.2] and [\cite{OO} appendix] for details.

As for a consequance of Proposition \ref{PRO:PLWP} and we follow up from Remark \ref{RM:LWP}. We state the following local well-posedness theory:
 \begin{proposition}
\label{PRO:LWP2}
Let $\phi\in \HS(L^2(\R^4);H^1(\R^4))$. 
Then,
given any $u_0 \in \mathcal E(\R^4)$, there exists an almost surely positive stopping time 
$T = T_{\omega}(u_0)$ and a unique
 local-in-time solution 
$u \in~C([0,T];\mathcal E(\R^4))$
to the energy-critical SNLS \eqref{SNLS}. 
Furthermore, the following blowup alternative holds; 
let $T^{*} = T_{\omega}^{*}(u_0)$ be the forward maximal time of existence. Then, either

\[
T^{*}=\infty 
\quad
\text{or}
\quad
\lim_{T\to T^*} \|u\|_{\dot X^1(T)}=\infty.
\]
\end{proposition}

Then, next proposition provide a priori control on the energy:
\begin{proposition}\label{Pro:Bound-Ham}
Assume $\phi\in \HS(L^2(\R^4);H^1(\R^4))$ and $u_0 \in \mathcal E(\R^4)$. 
Then,  for a given $T>0$:

\smallskip
\noi
\textup{(i)}
for any $t\in [0,T]$, the energy $E(u)(t)$ defined in \eqref{energy} can be expressed as
\begin{align}
E(u)(t)&=E(u_0)
\label{Eq:ham1}
+t\Big(\| \phi\|^2_{\HS(L^2;\dot{H}^1)}+\|\phi\|_{\HS(L^2;L^2)}^2\Big)\\
\label{Eq:ham2}
&\quad +\sum_{n \in \N}\iint_{[0,t]\times\R^4}\Big(|v^*|^2+\Im(\cj{v^*})^2+4\Re(v^*)\Big)|\phi_n|^2dt'dx\\
\label{Eq:ham3}
&\quad +\Im{ \iint_{[0,t]\times\R^4} \Big(|v^*|^2\overline{v^*}-\Dl \cj{v^*}+|v^*|^2+2\Re(v^*) \overline{v^*}+2\Re(v^*)\Big) \phi dWdx }.
\end{align}
\smallskip

\noi
\textup{(ii)} 
Moreover, given $T_0>0$, there exists a constant 
\[C_E= C \big( E(u_0), T_0, \| \phi \|_{\HS(L^2;H^1)}\big)>0\]
such that for any stopping time $T$ with $0<T< \min (T^{\ast}, T_0)$ almost surely, we have
\begin{align}\label{ham-bound}
\E \bigg[ \sup_{0\le t \le T} E(u) (t) \bigg] \le C_E.
\end{align}
where $u$ is the solution to the defocusing energy-critical SNLS \eqref{SNLS} with $u|_{t = 0}=u_0$
and 
$T^{\ast} = T^{\ast}_{\omega} (u_0)$ is the forward maximal time of existence.

\end{proposition}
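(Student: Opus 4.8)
The plan is to apply It\^o's formula to the energy functional $E(v^*)$ along the solution $v^*$ of \eqref{SNLS3}, read off the decomposition in part (i), and then run a Gronwall argument on the resulting identity after applying the Burkholder--Davis--Gundy (BDG) inequality. For part (i), the starting point is that the deterministic Gross--Pitaevskii flow conserves $E$; equivalently, in It\^o's formula $dE=\langle DE(v^*),dv^*\rangle+\tfrac12\langle D^2E(v^*)\,dv^*,dv^*\rangle$, the drift part of $dv^*$ equals $i\big(\Dl v^*-N(v^*)\big)=-iG(v^*)$ where $G(v^*)=DE(v^*)$ is the $L^2$-gradient, so $\langle DE(v^*),-iG(v^*)\rangle=\Re\!\int G\cdot\overline{(-iG)}=0$ by the Hamiltonian (gauge) structure. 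Hence no nonlinear drift survives, and only the second-order It\^o correction and the noise martingale remain. The correction, being the trace $\tfrac12\sum_n\langle D^2E(v^*)\phi_n,\phi_n\rangle$, splits into a $v^*$-independent piece --- the Hessian of $\tfrac12\int|\nb v^*|^2$ contributes $\|\phi\|^2_{\HS(L^2;\dot H^1)}$ and the quadratic-at-the-origin part of the potential contributes $\|\phi\|^2_{\HS(L^2;L^2)}$, giving the linear-in-$t$ term \eqref{Eq:ham1} --- and a $v^*$-dependent piece giving \eqref{Eq:ham2}. The martingale \eqref{Eq:ham3} is $\Im\int_0^t\langle G(v^*),\phi\,dW\rangle$ with $G(v^*)=|v^*|^2\overline{v^*}-\Dl\overline{v^*}+|v^*|^2+2\Re(v^*)\overline{v^*}+2\Re(v^*)$. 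Since $v^*$ only lies in the energy space, $\Dl v^*$ is merely a distribution and It\^o's formula cannot be applied directly; following Proposition~3.2 in \cite{BD}, I would first prove the identity for a spectrally truncated, finite-dimensional approximation where all quantities are smooth, and then pass to the limit using the regularity of $\Psi$ from Lemma \ref{LEM:sccont} together with the continuity of $v^*$ in $\cE(\R^4)$.

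For part (ii), take the supremum over $t\in[0,T]$ and then the expectation, writing $Y_T:=\sup_{0\le t\le T}E(u)(t)$. The term \eqref{Eq:ham1} is bounded by $C(T_0)\|\phi\|^2_{\HS(L^2;H^1)}$. For \eqref{Eq:ham2} I would use the Sobolev embedding $\dot H^1(\R^4)\subset L^4(\R^4)$, which gives $\|\,|v^*|^2\|_{L^2}=\|v^*\|^2_{L^4}\les E(u)$ and $(\Im\overline{v^*})^2\le|v^*|^2$, together with $\|\,|v^*|^2+2\Re(v^*)\|_{L^2}\les E(u)^{1/2}$ read off from \eqref{energy} (whence $\|\Re v^*\|_{L^2}\les E(u)^{1/2}+E(u)$), and $\|\sum_n|\phi_n|^2\|_{L^2}\le\sum_n\|\phi_n\|^2_{L^4}\les\|\phi\|^2_{\HS(L^2;H^1)}$. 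Pairing these by Cauchy--Schwarz in $x$ bounds $\sup_t|\eqref{Eq:ham2}|$ by $C\|\phi\|^2_{\HS}\int_0^T\big(1+E(u)\big)\,dt'$.

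The heart of the matter is the martingale \eqref{Eq:ham3}. The key algebraic observation is that, with $W:=|v^*|^2+2\Re(v^*)$, the integrand factors as $G(v^*)=-\Dl\overline{v^*}+(1+\overline{v^*})W$; integrating the Laplacian by parts against $\phi_n$, each summand is controlled by $|\int G(v^*)\phi_n\,dx|\les\|\nb v^*\|_{L^2}\|\nb\phi_n\|_{L^2}+\|W\|_{L^2}(1+\|v^*\|_{L^4})\|\phi_n\|_{H^1}$, so the quadratic variation obeys $[M]_T\les\|\phi\|^2_{\HS}\int_0^T\big(E(u)+E(u)^2\big)\,dt'$ --- crucially only $E(u)^2$, not $E(u)^3$, appears, which is what this rewriting buys us. Bounding $\int_0^T E(u)^2\le Y_T\int_0^T E(u)$, BDG and then Young's inequality $ab\le\tfrac\delta2 a^2+\tfrac1{2\delta}b^2$ give $\E[\sup_{t\le T}|M_t|]\les\|\phi\|_{\HS}\E\big[(1+Y_T)^{1/2}(\int_0^T E(u)\,dt')^{1/2}\big]\le\tfrac12\delta\,\|\phi\|_{\HS}\,\E[1+Y_T]+C_\delta\,\E\int_0^T E(u)\,dt'$. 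Choosing $\delta$ small absorbs $\tfrac12\E[Y_T]$ into the left-hand side. Setting $\Phi(t):=\E[\sup_{s\le t}E(u)(s)]$ and using $\E\int_0^t E(u)\,dt'\le\int_0^t\Phi(t')\,dt'$, one obtains $\Phi(T)\le C\big(E(u_0),T_0,\|\phi\|_{\HS(L^2;H^1)}\big)+C\int_0^T\Phi(t')\,dt'$, and Gronwall yields \eqref{ham-bound}.

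I expect the main obstacle --- beyond the algebraic factoring of $G$ that keeps the nonlinearity at the level of $E^2$ --- to be the rigorous absorption step, since it presupposes $\E[Y_T]<\infty$, which is not available a priori given that we only have local existence. I would resolve this by running the entire argument with $T$ replaced by the stopping time $T\wedge\tau_N$, where $\tau_N:=\inf\{t:E(u)(t)\ge N\}$, for which $Y_{T\wedge\tau_N}\le N<\infty$ makes the absorption legitimate; the resulting Gronwall bound is uniform in $N$, and letting $N\to\infty$ via monotone convergence (using $\tau_N\nearrow T^{\ast}$ and $T<\min(T^{\ast},T_0)$) produces \eqref{ham-bound}.
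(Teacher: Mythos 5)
Your proposal is correct, and for part (i) it takes the same route as the paper, which says no more than that the identity follows from It\^o's lemma justified by the approximation argument of Proposition 3.2 in \cite{BD}. For part (ii), however, your scheme is genuinely different. The paper never invokes Gronwall: it estimates each of \eqref{Eq:ham1}--\eqref{Eq:ham3} by \emph{strictly sub-linear} powers of $\sup_{0\le t\le T}E(u)(t)$ --- for instance it places $|v^*|^2\overline{v^*}$ in $\dot H^{-1}$ by duality and Sobolev so that the Burkholder--Davis--Gundy term is controlled by $\E\big[\sup_t E(u)(t)^{3/8}\big]$ --- and then uses Young's inequality to absorb small multiples of $\E\big[\sup_{t} E(u)(t)\big]$ directly into the left-hand side. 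Your route instead tolerates linear growth in $E$: the factorization $G(v^*)=-\Dl\overline{v^*}+(1+\overline{v^*})\big(|v^*|^2+2\Re (v^*)\big)$ (which is correct --- it is exactly the statement that $G=E'(v^*)$, with the potential part written in terms of the quantity the energy controls in $L^2$) keeps the quadratic variation at $\int_0^T(E+E^2)\,dt'$, the splitting $\int E^2\le Y_T\int E$ plus Young absorbs $\tfrac12\E[Y_T]$, and Gronwall in time finishes. As for what each approach buys: the paper's absorption yields a constant polynomial in $T_0$, but hinges on the delicate sub-linear-power estimates (indeed its bound of \eqref{Eq:ham2} by $\E[\sup_t E^{1/2}]$ glosses over the $\|v^*\|_{L^4}^2$ contribution, which is genuinely of order $E$ and which your Gronwall term $C\int(1+E)\,dt'$ handles painlessly); your route pays an exponential factor $e^{CT_0}$ but is more robust, and your stopping-time truncation $\tau_N$ makes explicit the a priori finiteness of $\E[Y_T]$ needed to legitimize any absorption step, a point the paper leaves implicit in its reference to \cite{BD}. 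One small point of rigor: Gronwall must be run in a deterministic time variable, so you should set $\Phi(s):=\E\big[\sup_{r\le s\wedge T\wedge\tau_N}E(u)(r)\big]$ for $s\in[0,T_0]$ and derive $\Phi(s)\le C+C\int_0^s\Phi(r)\,dr$; as written, your $\Phi(T)\le C+C\int_0^T\Phi(t')\,dt'$ conflates the stopping time with the running time, but the fix is standard and does not affect the argument.
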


\begin{proof}
The expression on $E(u)(t)$ follows from a standard application of Ito's Lemma. We turn to prove \eqref{ham-bound}. The term \eqref{Eq:ham1} is easily bounded:
\begin{align}\label{Eq:bound-ha1}
\E\bigg[\sup_{t_0\le t\le T}\eqref{Eq:ham1}\bigg]
&\les T\|\phi\|_{\HS(L^2;{H}^1)}.
\end{align}

\noi
Turning our attention to \eqref{Eq:ham2}, by the H\"older, Sobolev and Young inequalities, we have
\begin{align}\label{Eq:bound-ha2}
\begin{split}
\E\bigg[\sup_{0\le t \le T} \eqref{Eq:ham2}\bigg]
&\le C\E\bigg[\sum_{n \in \N}  \int_{[0,T]}   \Big(\|v^*\|_{L^4}^2+\| |v^*|^2 + 2 \Re(v^*)\|_{L^2}\Big) \|\phi_n\|^2_{L^4} \,dt'\bigg]\\
&\leq  C T\|\phi\|_{\HS(L^2;H^1)}^2\E\bigg[\sup_{0\le t \le T} \Big(E(u)(t)\Big)^\frac12\bigg]\\
&\leq CT^2 \|\phi \|_{\HS(L^2;\dot{H}^1)}^4+\frac{1}{8} \E\bigg[\sup_{0\le t \le T}E(u)(t)\bigg].
\end{split}
\end{align}

\noi
Finally, we bound \eqref{Eq:ham3}. By the Burkholder-Gundy-Davis, H\"older, Sobolev and Young inequalities, we have 
\begin{align*}
\begin{split}
\E\bigg[  \sup_{0 \le t \le T}  &\Im{ \iint_{[0,t]\times\R^4} |v^*|^2\cj{v^*} \phi dWdx }  \bigg]\\
&\le C \E\bigg[\bigg( \sum_{n \in \N} \int_{0}^{T} \bigg|\int_{\R^4} |v^*|^2\cj{v^*} \phi_n  dx \bigg|^2 dt'  \bigg)^{\frac{1}{2}} \bigg]\\
&\le C \E\bigg[  \bigg( \sum_{n \in \N} \int_{0}^{T}  \big\| |v^*|^2v^* \big\|_{\dot{H}^{-1}}^2  \|\phi_n\|_{\dot{H}^1}^2 \,dt'  \bigg)^{\frac{1}{2}} \bigg]\\
&\le C\E\bigg[  \bigg( \sum_{n \in \N} \int_{0}^{T} \big\|v^* \big\|_{L^{4}}^6  \|\phi_n\|_{\dot{H}^1}^2 dt'  \bigg)^{\frac{1}{2}} \bigg]\\
&\le C \|\phi\|_{\HS(L^2;H^1)}\E\bigg[\sup_{0\le t\le T}E(u)(t)^\frac38\bigg]\\
&\le C \|\phi\|_{\HS(L^2;H^1)}\E\bigg[1+\sup_{0\le t\le T}E(u)(t)^\frac12\bigg]\\
&\le C \|\phi\|_{\HS(L^2;H^1)}+C \|\phi\|_{\HS(L^2;H^1)}^2+\frac1{32}\E\bigg[\sup_{0\le t\le T}E(u)(t)\bigg].
\end{split}
\end{align*}

\noi
where we used the elementary fact $A^\frac38\le \min(1, A^\frac12)\le 1+A^\frac12$ in the penultimate inequality. The rest of contributions from \eqref{Eq:ham3} are controlled in a similar manner and we omit the details. Ultimately, we obtain
\begin{align}\label{Eq:bound-ha3}
\E\bigg[\sup_{0\le t\le T}\eqref{Eq:ham3}\bigg]\le C(\phi)+\frac18 \E\bigg[\sup_{0\le t\le T}E(u)(t)\bigg].
\end{align}
Combining \eqref{Eq:bound-ha1}--\eqref{Eq:bound-ha3} concludes the proof.
\end{proof}

\subsection{Global existence of SNLS}\label{SUB:global}
We are now ready to finish off the proof of the existence part of Theorem \ref{THM:0}. 
Recall that given a local-in-time solution $ u $ to \eqref{SNLS}, let $ v=u-1-\Psi $. Then, $ v $ satisfies \eqref{SNLS2}. The global existence part of Theorem \ref{THM:0} follows from applying Proposition \ref{Pro:perturb2} to equation \eqref{PNLS} with $ f=\Psi $, once we verify the hypotheses \textup{(i)} and \textup{(ii)}.

Let $T>0$.
From Proposition \ref{Pro:Bound-Ham} and Markov's inequality, we have the following almost sure a priori bound:
\begin{align}\label{a.s bund}
\sup_{0\leq t \leq T} E(u)(t)\leq C(\omega, T, E(u_0), \|\phi\|_{\HS(L^2;H^1)})< \infty.
\end{align}

\noi
for a solution $ u=v+1+\Psi $ to \eqref{SNLS}.
Since we set $v=u-1-\Psi=v^*-\Psi$, according to the definition of $E(u)$ from \eqref{energy}, and \eqref{a.s bund} we obtain
\begin{align*}
\|v\|_{L^\infty_t\dot{H}_x^1([0,T]\times\R^4)}
&\le  \|v^*\|_{L^\infty_t\dot{H}_x^1([0,T]\times\R^4)} + \|\Psi\|_{L^\infty_t\dot{H}^1_x([0,T]\times\R^4)}\\
&\le \sup_{0\le t\le T}\big[E(u)(t)]^\frac12 + \|\Psi\|_{L^\infty_t\dot{H}^1_x([0,T]\times\R^4)}\\
&\leq C(\omega, T, E(u_0), \|\phi\|_{\HS(L^2;H^1)})< \infty
\end{align*}

\noi
almost surely.
This shows that hypothesis \textup{(ii)} in Proposition \ref{Pro:perturb2} holds almost surely for some almost sure finite $ R=R(\omega)\geq 1 $.
On the other hand, 
we can verify hypothesis \textup{(i)} in Proposition \ref{Pro:perturb2}
by using
H\"older's inequality in time,
Markov's inequality,
and Lemma \ref{LEM:sccont}.
More precisely, by fixing finite $q > 3$ and noting $\frac{12}{5} < 4$ for $ d = 4$.
Then, 
 Lemma \ref{LEM:sccont} (ii) yields
\begin{align*}
\E \bigg[  \| \nb \Psi \|_{L^q([0,T];  L^{\frac{12}{5}}(\R^4))} \bigg] \le C \|\nb \phi \|_{\HS(L^2;L^2)}.
\end{align*}

\noi
Then, 
Markov's inequality yields
\begin{align}
\label{p1}
 \| \nb \Psi \|_{L^q([0,T];  L^{\frac{12}{5}}(\R^4))}  \leq C(\phi,\omega)<\infty.
\end{align}

\noi
which in turn implies $\Psi \in \dot{X}^1(I)$ almost surely (when $q=6>3$).
Moreover, it follows from \eqref{p1}
and H\"older's inequality in time that
\begin{align}
\label{Psi-Bound}
%\|\Psi\|_{L^\infty_t\dot{H}^1_x(I\times \R^4)}+
\|\Psi\|_{\dot{X}^1(I)}&\le 
|I|^{\theta} \| \nb \Psi \|_{L^q([0,T];  L^{\frac{12}{5}}(\R^4))} 
\leq
C(\phi,\omega)|I|^\theta
% \quad\quad\quad\mbox{ for any } I\subseteq [0,T],
\end{align}

\noi
{for any } $I\subseteq [0,T]$, where $\theta = \frac16-\frac{1}{q}>0$.
This verify \eqref{condition}.
Furthermore,
let $\phi\in \HS(L^2(\R^4);H^1(\R^4))$.
Lemma \ref{LEM:sccont} (i) 
guarantees that $\Psi \in C([0,T]; H^1(\R^4))$. Then, same as in \eqref{p1}, by Markov's inequality yields
\begin{align*}
%\label{p2}
 \|  \Psi \|_{L^\infty([0,T];  H^1(\R^4))}  \leq C(\phi,\omega)<\infty.
\end{align*}

\noi
This shows that hypothesis \textup{(i)} in Proposition \ref{Pro:perturb2} holds almost surely.

Hence, we can invoke Proposition \ref{Pro:perturb2} to extend the solution $v$ to \eqref{SNLS2} on $[0,T]$. From the discussion in the introduction, we reduce equation \eqref{SNLS} into \eqref{SNLS2} by \emph{Da Prato-Debussche trick}. Hence, it is enough to complete the existence part of the proof. 

\subsection{Unconditional uniqueness}
We turn now to showing that the global solutions
constructed above are unique among those that are continuous (in time) with values
in the energy space. We mimic the arguments in \cite{CKSTT} and \cite{KOPV}. To this end, let $ v_0 $
be such that $ 1 + v_0 \in \cE(\R^4) $ and let $ v $ be the global solution to \eqref{SNLS2} constructed in Subsection \ref{SUB:global}. In particular, $ v \in \dot{S}^1(I) $ for any compact time interval $ I $.
Let $ \wt v : [0, t']\times \R^4 \rightarrow \C $ be a second solution to \eqref{SNLS2} with the same initial data such
that $ 1+\wt v\in C([0, t']; \cE(\R^4)) $ almost surely and write $ z := (1+v)-(1+\wt v)= v-\wt v $. In what follows, we fix an $\omega\in\Omega$ for which both $v$ and $\widetilde{v}\in C([0,t'];\cE(\R^4))$. As $ z(0) = 0 $ and $ z$ is continuous in time, shrinking $ t' $ if necessary, we may assume
\begin{align}\label{un-con1}
\|\Re z\|_{L^{\infty}_tH^1_x([0,t']\times\R^4)} + 
\|\Im z\|_{L^{\infty}_t\dot{H}^1_x([0,t']\times\R^4)}\leq \eta
\end{align}
for a small $ \eta> 0 $ to be chosen shortly. By the Sobolev embedding $ \dot{H}^1(\R^4)\subset L^4(\R^4) $, this yields
\begin{align}\label{un-con2}
\|z\|_{L^{\infty}_t L^4_x([0,t']\times\R^4) }\lesssim\eta,
\end{align}
in particular, we have $ z \in L^2_tL^4_x([0, t'] \times \R^4) $. Recalling that $ v\in\dot{S}^1(I) $ almost surely for any compact time
interval $ I $ and further shrinking $ t' $ if necessary, we may also assume (by Sobolev embedding $ W^{1,\frac{12}{5}}(\R^4)\subset L^6(\R^4) $) that
\begin{align}\label{un-con3}
\|v\|_{L^6_{t,x}([0,t']\times\R^4)}\leq\eta.
\end{align}
On the other hand, as seen in the previous subsection, one can find an event of arbitrarily large probability such that \eqref{Psi-Bound} holds. Hence we may assume that $\omega$ lies in this event. By Sobolev embeddings $\dot{H}^1(\R^4)\subset L^4(\R^d)$ and $\dot{W}^{1,\frac{12}{5}}\subset L^6(\R^4)$, as well as shrinking $t'$ if necessary, we have
\begin{align}\label{un-con2b}
\|\Psi\|_{L^{\infty}_t L^4_x([0,t']\times\R^4) }&\le \eta,\\
\label{un-con3b}
\|\Psi\|_{L^6_{t,x}([0,t']\times\R^4)}&\leq\eta.
\end{align}

\noi
Now, we consider the following difference
\begin{align*}
\bigg| \big[ |v|^2v&+g(v,\Psi)  \big] - \big[  |\wt v|^2\wt v+g(\wt v,\Psi)       \big] \bigg|\\
&\les \big(   \Re(\Psi)|z|+|\Re(z)|+|\Psi||\Re(z)|+|\Psi|^2|z| \\
&\quad +|\Re(\Psi)||\Psi||\Re(z)|+|\Re(\Psi)||\Re(\overline{z})|+|z|^2\\
&\quad +|z||v|+|\Psi||z|^2  +|\Psi||z||v|+|\Re(\Psi)||z|^2\\
&\quad+|\Re(\Psi)||z||v|+|z|^3+|z||v|^2  \big)\\
&\les \mathcal O\big( |z|^3+|zv|^2+|\Psi z^2|+|z|^2 +|\Psi zv| +|\Psi z| +|\Psi^2z|+|zv| +|\Re(z)| \big).
\end{align*}

\noi
By the Strichartz and H\"older inequalities together with \eqref{un-con1}-\eqref{un-con3b}, we have
\begin{align*}
\|z&\|_{L^2_tL^4_x}+\|\Re(z)\|_{L^{\infty}_tL^2_x}\\&\lesssim \|z^3\|_{L^2_tL^{\frac{4}{3}}_x}+\|z v^2\|_{L^{\frac{6}{5}}_tL_x^{\frac{12}{7}}}+\|z^2\|_{L^1_tL^2_x}+\|z v\|_{L^1_tL^2_x} + \|\Psi z^2\|_{L^2_tL^{\frac{4}{3}}_x}\\
&\quad+\|\Psi z v\|_{L^{\frac{6}{5}}_tL_x^{\frac{12}{7}}}+\|\Psi z\|_{L^1_tL^2_x}+\|\Psi^2 z \|_{L^2_tL^\frac{4}{3}_x}+\|\Re(z)\|_{L^1_tL^2_x}\\
&\lesssim \|z\|_{L^2_tL^4_x}\big[ \|z\|^2_{L^{\infty}_tL^4_x} + \|v\|^2_{L^6_{t,x}}+ t'^{\frac{1}{2}}\|z\|_{L^{\infty}_tL^4_x} \\
& \quad + t'^{\frac12}\|v\|_{L^{\infty}_tL^4_x}
+\|\Psi\|_{L^{\infty}_tL^4_x} \|z\|_{L^{\infty}_tL^4_x}+\|\Psi\|_{L^6_{t,x}} \|v\|_{L^6_{t,x}}\\
&\quad + t'^{\frac{1}{2}}\|\Psi\|_{L^{\infty}_tL^4_x}+\|\Psi\|^2_{L^6_{t,x}}
\big]+t'\|\Re(z)\|_{L^{\infty}_tL^2_x}\\
&\lesssim  \|z\|_{L^2_tL^4_x}(\eta^2+\eta t'^{\frac12}+t'^{\frac12})+t'\|\Re(z)\|_{L^{\infty}_tL^2_x}.
\end{align*}
where we omitted the domain $ [0, t']\times\R^4 $ above for the sake of readability. Taking $ \eta $ sufficiently
small and shrinking $ t' $ further if necessary, we obtain
\begin{align*}
\|z\|_{L^2_tL^4_x([0,t']\times \R^4)} + 
\|\Re(z)\|_{L^{\infty}_t L^2_x([0,t' ]\times \R^4)} = 0,
\end{align*}
which proves $ v =\wt v $ almost surely on $ [0, t'] \times \R^4 $.

By time translation invariance, this argument can be applied to any sufficiently short time interval, which yields global unconditional uniqueness. This completes the proof of Theorem \ref{THM:0}.

\begin{ack}\rm 
	
The authors would like to thank their advisors Tadahiro Oh and Oana Pocovniu for proposing this problem and their continuing support. The authors are also grateful to Mamoru Okamoto, Yuzhao Wang, and Justin Forlano for several helpful discussions on the present paper. 
The authors especially would like to thank the anonymous referee for helpful comments.
K.C.~and G.L.~were supported by The Maxwell Institute Graduate School in Analysis and its Applications, a Centre for Doctoral Training funded by the UK Engineering and Physical Sciences Research Council (Grant EP/L016508/01), the Scottish Funding Council, Heriot-Watt University and the University of Edinburgh. 
K.C.~was partly
supported by the Tadahiro Oh's European Research Council (grant no. 637995 ``ProbDynDispEq").
G.L.~also acknowledges support
from Tadahiro Oh's European Research Council (grant no. 637995 ``ProbDynDispEq").

\end{ack}

\end{document}